\newtheorem{remark}{Remark}
\newtheorem{proposition}{Proposition}
\newtheorem{corollary}{Corollary}
\newcommand{\sara}[1]{\textcolor{black}{#1}}
\newcommand*{\de}{\mathop{}\!\mathrm{d}}
\DeclarePairedDelimiter{\norm}{\lVert}{\rVert}
\DeclarePairedDelimiter{\abs}{\lvert}{\rvert}
\title{\bf{Exponential Time Differencing for the Tracer Equations Appearing in Primitive Equation Ocean Models}}
\author{%
  S. Calandrini\footnote{Department of Scientific Computing, Florida State University, Tallahassee FL 32306, USA.}%
  \and K. Pieper\footnote{Computer Science and Mathematics Division, Oak Ridge National Laboratory, One Bethel Valley Road, P.O. Box 2008, 
  MS-6211, Oak Ridge, TN 37831, USA.}%
  \and M. D. Gunzburger$^{*}$ %\footnote{Department of Scientific Computing, Florida State University, Tallahassee FL 32306, USA.}%
  }
\date{}
\begin{document}

\maketitle

\begin{abstract}
The tracer equations are part of the primitive equations used in ocean modeling and 
describe the transport of tracers, such as temperature, salinity or chemicals, in the ocean. 
Depending on the number of tracers considered, several equations may be added to and 
coupled to the dynamics system. In many relevant situations, the time-step requirements of
explicit methods imposed by the transport and mixing in the vertical direction are more restrictive
than those for the horizontal, and this may cause the need to use very small time steps if a fully explicit 
method is employed. To overcome this issue, we propose an exponential time 
differencing (ETD) solver where the vertical terms (transport and diffusion) are treated with a matrix exponential, whereas the 
horizontal terms are dealt with in an explicit way. We investigate numerically the computational speed-ups
that can be obtained over other semi-implicit methods, and we analyze the advantages of the method in the case of multiple tracers. 
%we compare different approaches for the computation of the $\varphi$-functions. 
\end{abstract}

% \begin{keyword}
% tracer equation, primitive equations, exponential time differencing.
% %\MSC[2010] 00-01\sep  99-00
% \end{keyword}

%\linenumbers

\section{Introduction}

The primitive equations are the constitutive system of equations in ocean modeling. This system is composed of a momentum
equation, an equation for the ocean thickness, equations for the transport of tracers, as well as an equation of state. 
% A normal expression of the primitive equations in continuous form is: 
% \begin{align}
% %momentum equation:
% & \dfrac{\partial {\bf u}}{\partial t} + \eta {\bf k} \times {\bf u} + w\dfrac{\partial {\bf u}}{\partial z} = 
% - \dfrac{1}{\rho_0} \nabla p - \dfrac{\rho g}{\rho_0} \nabla z^{mid} - \nabla K
% + {\bf D}^u_h + {\bf D}^u_{\nu} + \mathcal{F}^u\;, \label{momentum}\\
% %thickness equation:
% & \dfrac{\partial h}{\partial t} + \nabla \cdot (h {\bf \overline{u}}^z) + w|_{z=s^{top}} - w|_{z=s^{bottom}} = 0\;, \label{thickness}\\
% %tracer equation:
% & \dfrac{\partial (h \overline{T}^z)}{\partial t} + \nabla \cdot (h \, {\overline{T {\bf u}}}^z) + T w|_{z=s^{top}}
% - T w|_{z=s^{bot}} = D^{T}_h + D^{T}_{\nu} + \mathcal{F}^T\;, \label{tracer}\\
% %hydrostatic condition:
% & p(x,y,z) = p^s(x,y) + \int_z^{z^s}\rho g dz'\;,\\
% %equation of state:
% & \rho = f_{eos}(\Theta, S, p)\;,
% \end{align} \label{primitive}
% i.e. incompressible Boussinesq equations in hydrostatic balance. 
The momentum and thickness equations describe the dynamics, i.e. the change in time of the velocity and thickness of the water.  
The tracer equation describes the transport of tracers, such as temperature, salinity or chemicals. 
Depending on the number of tracers considered, several equations may be added to and coupled to the dynamics system.
The coupling between the tracers and the dynamics depends on the nature of tracers. Temperature and salinity 
impact the density of the layers of ocean water, influencing the dynamics, and for this reason they are called 
active tracers. Hence, between the dynamics and active tracers there is a two-way coupling. 
However, for most tracers the coupling is only one-way, with such tracers being called passive \cite{siberlin2011oceanic}. 

To this day, the numerical solution of the primitive equations remains a challenging task. One of the main issues 
is the presence of multiple time-scales, where different processes (e.g., external and internal gravity waves, 
eddies, biochemical reactions) take different times to be completed. Since the
primitive equations arise from hyperbolic conservation laws (\cite{chiodaroli2017existence, oliger1978theoretical}),
explicit time integrators and Runge-Kutta schemes would hypothetically be good choices for
solving it. However, these schemes are not capable of efficiently handling multiple time-scales, 
because the time-steps restrictions are too severe, resulting in a significant degradation of performance. 
To better handle multiple time-scales, methods have been developed with the help of mathematical 
and algorithmic techniques such as splitting strategies and semi-implicit approaches.
Due to the different properties of the dynamics and the tracer equations, schemes have been created to 
separately deal with the two subsystems (\cite{ringler2013multi, smith2010parallel}). 
Methods for the dynamics need to take into account the different 
wave speeds of the model, with the goal of having a model for which time step sizes are governed by the slow wave 
speeds or the speed of advection and not by the fast wave speeds as is the case for standard explicit schemes.
Examples of time-stepping schemes for the dynamics include implicit (\cite{dukowicz1994implicit}) and 
split-explicit (\cite{higdon2005two, ringler2013multi}) methods.
More recently, exponential time differencing (ETD) methods, also known as exponential integrators, have gained 
attention in the ocean modeling community due to their stability properties that allow time steps considerably 
larger than those dictated by the CFL condition. In \cite{pieper2019exponential}, an ETD scheme has been developed 
for the rotating shallow water equations with multiple horizontal layers, which correspond to a vertical discretization 
of the primitive equations dynamics in an isopycnal vertical coordinate system. 
The main idea behind exponential integrators is a splitting of the right-hand side term of an equation into a linear part 
and a remainder, i.e. \begin{equation*} \partial_t \theta = F(\theta) = A \theta + R(\theta)\;, \end{equation*}
with an appropriate choice of the linear operator $A$. For a review of exponential integrators we refer to 
\cite{hochbruck2010exponential}.

In this work, we devise an ETD method for the tracer equation. 
A tracer is supposed to satisfy a conventional advection-diffusion equation of the form, \begin{equation}
 \label{conv_diff}
 \partial_t \theta + \nabla_{\mathbf{x}}\cdot(\mathbf{u} \theta) + \mathcal{D}_{\mathbf{x}} \theta = q(\theta)
\end{equation}
where $\theta$ is the tracer studied, $\mathbf{u} = (u, w) \in R^3$ is the velocity of water,
which is usually split in to the horizontal velocity \(u \in R^2\) and the vertical
velocity \(w\), $\mathcal{D}$ is a diffusion term, and $q(\theta)$ represents interior sources or
sinks.
The equation~\eqref{conv_diff} has to be solved on a three-dimensional domain, split into
a two-dimensional horizontal and a vertical coordinate \(\mathbf{x} = (x,z)\), and
\(\nabla_{\mathbf{x}} = (\nabla, \partial_z)\). We note that, the tracer equation can usually also be written as
\begin{equation}
 \label{conv_diff_split}
 \partial_t \theta + \nabla_x \cdot(u \theta) + \mathcal{D}_x \theta + \partial_z(w \theta)
 - \partial_z (\kappa_z \partial_z \theta) = q(\theta),
\end{equation}
with the horizontal diffusion \(\mathcal{D}_x\), and the vertical diffusion coefficient
\(\kappa_z\). 
%Conservation of mass is guaranteed imposing \(\mathbf{u}\cdot n = 0\). 
To avoid problems with shocks, the velocity field $\mathbf{u}$ is usually chosen to be divergence free and tangential to the boundary. 

When dealing with the tracer equation, a key point is appropriately including vertical mixing. 
Tracer vertical mixing usually occurs on small time-scales and can be induced by density differences and/or by turbulent motions. 
%In general, the time-scales associated with the horizontal advection are larger than those associated with vertical mixing, 
%so the transport and mixing in the vertical direction usually cause more restrictive time step requirements than those for the horizontal. 
For explicit time stepping schemes, the time step requirement is usually set by the horizontal advective CFL condition, 
hence very small time steps may need to be used with explicit time stepping methods to include realistic vertical mixing of tracers.  
To avoid this issue, in the time-stepping schemes used by popular ocean models, the vertical diffusion term 
$\partial_z (\kappa_z \partial_z \theta)$ is treated implicitly. 
In POP \cite{smith2010parallel}, the vertical tracer diffusion term is treated with an implicit Euler algorithm, 
whereas the remaining terms of the equation are treated with a leapfrog algorithm. 
In MPAS-Ocean \cite{ringler2013multi}, tracer equations are stepped forward with the mid-time velocity values and 
this process is repeated in a predictor-corrector way. Implicit vertical mixing of tracers completes each time-step, where, 
as in POP, the vertical tracer diffusion term is treated with an implicit Euler algorithm. 
% To avoid this issue, semi-implicit methods may be used, where the vertical diffusion term is treated implicitly. 
% The ocean models MPAS-Ocean \cite{ringler2013multi} and POP \cite{smith2010parallel} follow this approach.  
% In both models, the vertical tracer diffusion term is treated with an implicit Euler algorithm, while the remaining
% terms of the tracer equation use an explicit method. For the explicit time step, POP uses a leapfrog algorithm,
% while MPAS-Ocean uses a fourth-order Runge-Kutta method. 

Another important phenomenon to deal with, besides vertical mixing, is when an inflow of cold water 
near the coast leads to cold water on top, scenario that creates density and pressure variations and downward motion. 
In an isopycnal configuration, there is no vertical transport, however, if a mostly Eulerian coordinate 
system is employed (z-star, z-level), rapid variations in the pressure/density may induce a fast vertical flow. 
This phenomenon is amplified by the usage of fine meshes in the vertical, with much smaller vertical than horizontal spacing,  
for example \cite{ringler2013multi} employs \(1\)--\(15\)km horizontal, \(10\)--\(250\)m vertical. 
As we said, many ocean models treat the vertical advection explicitly and so, when a fast transport of water among different 
layers occurs, the model may be unable to appropriately capture this behavior. Consequently, instabilities in the simulation 
may occur, causing the need to decrease the time-step. 
To resolve this issue, we propose an ETD solver where all the vertical terms, i.e.\ vertical advection and diffusion, 
are treated with a matrix exponential, whereas the horizontal are dealt with in an explicit way. 
This means that we are splitting the linear operator $A$ into two parts: $A^z$ that accounts for all vertical terms, and $A^x$  
that contains all horizontal terms. $A^x$ is then incorporated in the remainder $R(\theta)$, so the actual linear 
operator we are working with is $A^z$. This operator splitting has two advantages. 
First, by treating exponentially terms related to fast time-scales, bigger time steps can be taken, and so 
computational speed-ups that can be obtained over other explicit methods. Compared to
semi-implicit methods, we expect higher accuracy due to an exact treatment of the fast scales.
Second, by including only the operator $A^z$ in the exponential, not the whole operator
$A$, the computational cost is reduced, effectively lowering the total computational cost and time of the whole method. 

Another important challenge to face when dealing with tracer equations, and in general with primitive equations, is in 
the numbers of tracers considered. The amount of tracers in an ocean simulation is usually around 40, but may increase 
up to 70, causing a significant computational load. Hence, efficiently solving multiple tracers equations is an important task  
in an ocean model. When an ETD scheme is used, efficiency corresponds to a low-cost evaluation of $\varphi_k$-functions. 
For a given matrix $B$, assembling $\varphi_k(B)$ is generally prohibitive in the large scale context; iterative methods
are used instead, i.e. Krylov subspace algorithms (\cite{eiermann2006restarted, saad1992analysis}). 
%------before
% For the specific case of the evaluation of $\varphi_0$-functions, i.e. matrix exponentials, scaling and squaring algorithms
% (\cite{al2009new, dieci2000pade, higham2005scaling,  moler2003nineteen, najfeld1995derivatives}) are mostly employed. 
% In this work, together with the usage of a Krylov method, we pursue a second approach based on scaling and squaring relations,  
% focusing on the fact that there is no communication in ocean due to the vertical exponential, since the domain-decomposition 
% is horizontal, and the exponential is vertical. 
% To efficiently solve multiple tracer equations, we intend to preassemble $\varphi_k(B)$ instead of computing $\varphi_k(B)f$ 
% for every right hand side $f$. In the cases considered, the matrix $B$ is banded (tridiagonal), and the scaling-squaring 
% approximation to $\varphi_k(B)$ will be banded as well. Additionally, since the current ocean models use only 40-100 layers, 
% even a full storage of exponential is feasible. 
%------after
\sara{
Scaling and squaring is used usually for dense matrices (\cite{al2009new, dieci2000pade, higham2005scaling,  moler2003nineteen, najfeld1995derivatives}) 
However, it has also been used in the context of multiwavelet Galerkin methods, where a certain level of sparsity can be maintained throughout the  iterations of the method and the approximation of $\varphi_k(B)$ (\cite{archibald2011multiwavelet}). 
In this work, together with the usage of a Krylov method, we pursue a second approach based on scaling and squaring relations, 
focusing on the fact that there is no communication in ocean due to the vertical exponential, since the domain-decomposition 
is horizontal, and the exponential is vertical. The proposed approximation scheme is based on polynomials of moderate degree that result 
in a consistent and stable approximation of $\varphi_k(B)$ with low bandwidth. 
To efficiently solve multiple tracer equations, we intend to preassemble $\varphi_k(B)$ instead of computing $\varphi_k(B)f$ for every right 
hand side $f$. Additionally, since the current ocean models use only 40-100 layers, even a full storage of exponential is feasible.}

%comparison with other ocean models 
Finally, our solver is compared with existing ocean models, to make sure that it is able to reproduce similar results under the same physical 
conditions. 
To do so, the whole primitive equations are solved and two tests from \cite{ilicak2012spurious} are performed. 
The tracer equations is coupled with the dynamics, and to solve this latter system a second ETD solver (Exponential Rosenbrock Euler) is used. 
The results obtained with our ETD solvers are compared with those obtained with three other codes: MPAS-Ocean, MITgcm and MOM.

The paper is organized as follows. Section~\ref{tracer_eq_section} describes the discretization of the tracer equation both in the 
vertical and in the horizontal. Section~\ref{ETD} focuses on exponential integrators and their implementation. 
For the computation of $\varphi_k$-functions, a restarted Krylov subspace method is described, and a scheme based on scaling 
and squaring relations is proposed. For the latter scheme, an error analysis is provided. 
In section~\ref{solver}, the proposed ETD solver for the tracer equation is presented, and the properties of the operator splitting are described. 
The numerical tests are shown and discussed in section~\ref{tests}, while the conclusions follow in section~\ref{sec:conclusions}. %\cref{sec:conclusions}.

% \begin {table}[!h] %[!tpb]
% \setlength\tabcolsep{5.5pt} % default value: 6pt
% \begin{center}
% 	\begin{tabular}{|c|c|c|c|c|} \hline	
% 	%\multirow{2}{*} {}   & \multicolumn{3}{|c|}{$k_{\nu} = 2.5 \cdot 10^{-5}$} \\ \cline {2-4}
% 	%& \multicolumn{1}{|c|} {dt} & \multicolumn{1}{|c|} {time steps} & \multicolumn{1}{|c|} {computational time}\\ \hline
% 	\multicolumn{1}{|c|} {Symbol} & \multicolumn{1}{|c|} {name} & 
% 	& \multicolumn{1}{|c|} {Symbol} & \multicolumn{1}{|c|} {name} \\ \hline
% 	     ${\bf u}$ & horizontal velocity & & $p^s$ & surface pressure \\ \hline 
% 	     $w$ & vertical transport & & $\rho$ & density \\ \hline
% 	     $z$ & vertical coordinate & & $\rho_0$ & reference density \\ \hline
% 	     $z_{mid}$ & layer mid-depth cell location & & $T$ & tracer \\ \hline
% 	     $g$ & grav. acceleration & & $\varTheta$ & temperature \\ \hline
% 	     $h$ & layer thickness & & $S$ & salinity \\ \hline
% 	     $K$ & kinetic energy & & $\eta$ & absolute vorticity \\ \hline
% 	     $p$ & pressure & & ${\bf k}$ & vertical unit vector \\ \hline
% % 	     $p^s$ & surface pressure \\ \hline
% % 	     $\rho$ & density \\ \hline
% % 	     $\rho_0$ & reference density \\ \hline
% % 	     $T$ & \\ \hline
% % 	     $\varTheta$ & \\ \hline
% % 	     $S$ & salinity \\ \hline
% 	\end{tabular}
% \end{center}
% \caption{Variables used in the primitive equations.} \label{table_with_names}
% \end{table}

\section{Discretization of the Tracer Equation} \label{tracer_eq_section}

In this section, the discretization of the tracer equation \eqref{conv_diff} in the vertical and in the horizontal is described. 
In most ocean models, an hydrostatic condition is assumed, leading to describing the primitive equation by the incompressible 
Boussinesq equations in hydrostatic balance. Following this assumption, the tracer equation in continuous form can be re-written as 
\begin{equation}
\dfrac{\partial (\widetilde{\rho} T)}{\partial t} = - \nabla \cdot (\widetilde{\rho} u T) - 
\dfrac{\partial (\widetilde{\rho} T w)}{\partial z} + D^{T}_x + D^{T}_{z} + \mathcal{F^T}\;, \label{tracer} 
\end{equation}
where $T$ is the tracer, $u$ is the horizontal velocity, $w$ is the vertical velocity and $\widetilde{\rho}$ is the pseudo-density.  
The variable $z$ represents the vertical coordinate and it defined positive upward.
$D^{T}_x$ and $D^{T}_{z}$ indicate the horizontal and vertical diffusion term, respectively. 
These terms are defined as 
\begin{align}
D^{T}_x & = \nabla \cdot (h\; \kappa_{x} \nabla T) \;, \\
D^{T}_{z} & = h \frac{\partial}{\partial z} \big(\kappa_{z} \frac{\partial T}{\partial z} \big) \;,
\end{align}
where $\kappa_x$ and $\kappa_{z}$ are the horizontal and vertical diffusion, respectively.
Without loss of generality, we assume that no forcing term is present, i.e. $\mathcal{F^T} = 0$. 
This assumption is equivalent to consider that no external factors have an influence on the tracer behavior.

To discretize equation \eqref{tracer}, we employ z-level coordinates in the vertical \cite{petersen2015evaluation}
and a finite-volume method using a C-grid staggering in the horizontal \cite{ringler2011exploring}. 
For more details about the discretization of all primitive equations, please refer to \cite{ringler2013multi}. 

\subsection{Tracer Equation with vertical discretization}

As in MPAS-Ocean \cite{ringler2013multi, petersen2015evaluation}, the vertical coordinate we use is Arbitrary Lagrangian-Eulerian (ALE). 
With ALE, several coordinate systems can be specified depending on the application. Common choices for the vertical coordinates include 
z-level, where all layers have a fixed thickness except for the top layer, z-star, where all layer thicknesses vary in proportion to 
the sea surface height, and isopycnal, where there is no vertical transport between layers. 

The tracer equation with vertical discretization is
\begin{align}
& \dfrac{\partial (h_{k} T_k)}{\partial t} = - \nabla \cdot (h_k u_k T_k) - 
\overline{T_k} w_k + \overline{T_{k+1}} w_{k+1} + [D^{T}_x]_k + [D^{T}_z]_k \;, \label{tracer_vert_discr}
\end{align}
where $k$ indicates the vertical layer, $k = 1$ is the top layer and $k$ increases downward up to $N$; 
$z = 0$ is the mean elevation of the free surface, and the $z$ coordinate is positive
upward. 
The variable $w_k$ indicates the transport of fluid from layer $k$ to $k-1$, i.e. across the top interface of layer $k$. 
The pseudo-density $\widetilde{\rho}$ has been replaced by the $h$, which is the layer-thickness. 
The operator $\overline{(\cdot)}$, on a generic variable $\psi_k$, is the vertical average between the layer $k$ and the 
above layer $k-1$, i.e.,   
\begin{equation}
\overline{\psi_k} = \dfrac{\psi_{k-1} + \psi_k}{2}\;.
\end{equation}
Finally, $[D^{T}_x]_k$ and $[D^{T}_{z}]_k$ indicate the discretized horizontal and vertical tracer diffusion terms, respectively, 
and are defined as
\begin{equation} [D^{T}_x]_k = \nabla \cdot (h_k \kappa_{x} \nabla T_k)\;, \quad \qquad 
[D^{T}_{z}]_k = h_k \underline{\delta z_k}(\kappa_{z} \overline{\delta z_k}(T_k))\;. \end{equation}
% where $\kappa_{x}$ and $\kappa_{z}$ are the horizontal and vertical diffusion, respectively. 
The discrete operators $\overline{\delta z_k}(\cdot)$ and $\underline{\delta z_k}(\cdot)$, on a generic variable $\psi_k$,  
are defined as
\begin{align}
 \overline{\delta z_k} (\psi_k) = \dfrac{\psi_{k-1} - \psi_k}{{\overline{h_k}}}\;, \\
 \underline{\delta z_k} (\psi_k) = \dfrac{\psi_{k} - \psi_{k+1}}{h_k}\;.
\end{align}
% The superscripts $m$ and $t$ denote the location as the middle or top of layer $k$ in the vertical. 
% Colons in subscripts indicate that multiple layers are used by the vertical operator.
The vertical tracer diffusion term can actually be rewritten without introducing the operator $\underline{\delta z_k}$ as 
$[D^{T}_{\nu}]_k = \kappa_{z} (\overline{\delta z_k}(T_k) - \overline{\delta z_k}(T_{k+1}))$.

The choice of the vertical coordinate system is enforced in the computation of the vertical transport. 
% Let $w_k$ be the vertical transport of fluid at layer $k$ through the top cell interface. 
$w_k$ can be found by solving the thickness equation for $w_k$. 
The thickness equation discretized in the vertical has the form
%thickness equation:
\begin{equation} 
 \dfrac{\partial h_k}{\partial t} + \nabla \cdot (h_k {\mathbf u}_k) + w_k - w_{k+1} = 0\;. \label{thickness}
\end{equation} 
For a Boussinesq fluid, this equation represents the continuity equation for the pseudo-density $\widetilde{\rho}$.  
To obtain $w_k$ from \eqref{thickness}, all variables at the previous time step must be known, in particular the time 
derivative of $h$ at layer $k$, $\frac{\partial h_k}{\partial t}$, must be known. 
For this purpose, a new quantity, named $h_k^{ALE}$, is introduced. $h_k^{ALE}$ represents the desired thickness for the new time, 
and it is used to compute $\frac{\partial h_k}{\partial t}$ using a first-order finite difference approximation. 
In this way, $w_k$ can be found as
\begin{equation}
w_k = w_{k+1} - \nabla \cdot (h_k {\mathbf u}_k) - \dfrac{h^{ALE}_k - h_k}{\Delta t}\;. \label{vertical_transport}
\end{equation}
For isopycnal simulations, the vertical transport is set to zero in \eqref{vertical_transport}. For other coordinate systems, like z-level and 
z-star, the way $h_k^{ALE}$ is computed determines the type of coordinates chosen. For example, for z-level vertical coordinates 
\begin{align*}
h_1^{ALE} & = h_1^{rest} + \zeta \;, \\
h_k^{ALE} & = h_k^{rest}, \mbox{ for } k > 1 \;,
\end{align*}
where $h_k^{rest}$ is the layer thickness when the ocean is at rest, and $\zeta$ is the sea surface height defined as 
$\sum_k h_k - \sum_k h_k^{rest}$. 
For z-level coordinates (and for z-type coordinates in general) the resting thickness is considered constant in each horizontal layer,
but for other coordinate systems, like sigma coordinates, $h_k^{rest}$ varies horizontally in proportion to the column's total depth. 
The simulations presented in section \ref{tests} use z-level vertical coordinates.
Please refer to \cite{petersen2015evaluation} for more details about the computation of $h_k^{ALE}$ for other coordinate systems. 

\subsection{Tracer Equation with horizontal discretization} 

The horizontal discretization is a C-grid, finite-volume method applied to a spherical centroidal
Voronoi tessellation (SCVT) mesh.  Height, tracers, pressure and kinetic energy are defined at centers 
of the convex polygons, and the velocity is located at cell edges. 
Vorticity (curl of velocity) is defined at cell vertices. 
In the following, the subscripts $i$ and $e$ indicate the discretized variables through cell centers and edges, respectively. 
Since we are focusing on the discretization of the tracer equation only, we will not work with variables and operators 
defined at cell vertices.

The tracer equation with horizontal discretization is 
\begin{align}
\dfrac{\partial (h_{k,i} T_{k,i})}{\partial t} = & - [\nabla \cdot \big (\widehat{({h}_{k,: })}_e {\mathbf u}_{k,:} \widehat{({T}_{k,:})}_e \big)]_i 
- \overline{T_{k,i}} w_{k,i} + \overline{T_{k+1,i}} w_{k+1,i} \nonumber \\ 
& + [D^{T}_x]_{k,i} + [D^{T}_{z}]_{k,i} \;, \label{tracer_hor_discr} \\
[D^{T}_x]_{k,i} & = [ \nabla \cdot \big( \widehat{({h}_{k,: })}_e \, \kappa_{x} \, [\nabla T_{k,:}]_e \big) ]_i\;, \quad \qquad 
[D^{T}_{z}]_{k,i} = h_{k,i} \underline{\delta z_k}(\kappa_{z} \overline{\delta z_k}(T_{k,i}))\;. \end{align}
Each variable now has two sub-scripted indices, the first indicating the vertical layer, and the second indicating its position on the 
horizontal grid, namely either $i$ or $e$.
Colons in subscripts may be places as second index to indicate that multiple edges or cell centers are used 
in computing the horizontal operator. 
% Colons in subscripts indicate that multiple vertical layers were used for a vertical operator (first index), 
% or that multiple edges or cell centers are used in computing the horizontal operator (second index). 
For a generic variable $\psi_k$, the symbol $\widehat{(\psi_{k,:})}_e$ represents the averaging of the variable from two adjacent centers 
to the corresponding edge.
We would like to point out that the vertical transport through the sea surface and at the bottom surface is zero, 
i.e. $w_{1,i} = 0$ and $w_{N+1,i} = 0$. Moreover, we consider $u_{k,e} = 0$ on all boundary edges.

For a generic vector field ${\mathbf Y}_k$ and variable $\psi_k$, 
the discrete horizontal operators $[\nabla \cdot {\mathbf Y}_{k,:}]_i$ and $[\nabla \psi_{k,:}]_e$ are defined as
\begin{align}
[\nabla \cdot {\mathbf Y}_{k,:}]_i & = \frac{1}{A_i} \sum_{e \in E(i)} n_{e,i} Y_{k,e} l_e\;, \\
[\nabla \psi_{k,:}]_e & = \frac{1}{d_e} \sum_{i \in C(e)} - n_{e,i}\psi_{k,i} \;.
\end{align}
$A_i$ indicates the Voronoi cell area, $d_e$ is the distance between cell centers, $l_e$ is edge length 
and $n_{e,i}$ represents the sign of the vector at edge $e$ with respect to cell $i$. 
The sets $E(i)$ are the edges about cell $i$, and the sets $C(e)$ are the cells neighboring edge $e$. 
Thus, the divergence moves from edges to cell-centered quantity, while the gradient moves from cell centers to edges.

\section{Exponential Time Integration} \label{ETD}
This section describes exponential time differencing methods, that later will be used to solve the tracer equation. 
%-----before
% ETD methods have already been employed in ocean modeling to solve the shallow water equations. For more details about this topic, 
% please refer to \cite{pieper2019exponential}.
%-----after
\sara{ETD methods have already been employed to solve the single layer (\cite{clancy2013use, gaudreault2016efficient, luan2019further}) 
and multi-layer (\cite{pieper2019exponential}) shallow water equations.} 

\subsection{Exponential Integrators}
Let $\partial_t T = F(T)$ be a system of partial differential equations (PDEs), where $T=T(t)$ denotes the vector of the solution variables 
for $t \in [t_n, t_{n+1}]$, and $F(T)$ is the right-hand-side term.
The interval $[t_n, t_{n+1}]$ refers to one time step. 
The main idea behind exponential integrators is a splitting of the right-hand-side term into a linear
part and a remainder, i.e. 
\begin{equation}
\partial_t T = F(T) = A_n T + R(T), \label{splitting1}
\end{equation}
where $A_n$ represents a linear operator, and $R(T):=F(T)-A_n T$ denotes the remainder, 
which in general is nonlinear. Applying the variation of constants formula to equation \eqref{splitting1}, 
the solution at time $t_{n+1} = t_n + \Delta t$, i.e. $T_{n+1}=T(t_{n+1})$, is obtained as
\begin{equation}
 T_{n+1} = \exp(\Delta t A_n) T_n + \int_0^{\Delta t} \exp((\Delta t - \tau )A_n) R(T(t_n + \tau)) d\tau\;. \label{variation_of_consts}
\end{equation} 
At this point, to build a concrete exponential integrator, an approximation of $R(T(t_n + \tau))$ must be considered. 
By substituting $R(T(t_n + \tau))$ with its Taylor expansion truncated at $s$ ($s \in \mathbb{N}$), namely    
\[
R(T(t_n + \tau)) = \sum_{k=1}^{s} \dfrac{\tau^{k-1}}{(k-1)!}
%\dfrac{\partial^{k-1} R(v_n)}{\partial \tau^{k-1}}\,,
\left.
\dfrac{\de^{k-1} R(v(t_n + \tau))}{\de \tau^{k-1}}
\right\rvert_{\tau=0} \,,
\] 
the solution $T_{n+1}$ can be approximated by 
\begin{equation*}
T_{n+1} \approx \exp(\Delta t A_n) T_n + \sum_{k=1}^s \dfrac{1}{(k-1)!} \Big[ \int_0^{\Delta t} \exp((\Delta t- \tau) A_n)\tau^{k-1} 
d\tau \Big] 
\left.
\dfrac{\de^{k-1} R(T(t_n + \tau))}{\de \tau^{k-1}}
\right\rvert_{\tau=0}.
%\dfrac{\partial^{k-1} R(v_n)}{\partial \tau^{k-1}}\,. 
\end{equation*} 
The above expression can be rewritten as
\begin{equation}
T_{n+1} \approx \exp(\Delta t A_n) T_n + \sum_{k=1}^s \Delta t^k \varphi_k(\Delta t A_n)
\left.
\dfrac{\de^{k-1} R(T(t_n + \tau))}{\de \tau^{k-1}}
\right\rvert_{\tau=0} \,,
% \dfrac{\partial^{k-1} R(v_n)}{\partial \tau^{k-1}}\,,
\label{exp_int}
\end{equation}
by introducing the, so called, $\varphi$-functions defined by
\begin{align}
\varphi_k(\Delta t A_n) & = \dfrac{1}{\Delta t^k(k-1)!} \int_0^{\Delta t} \exp((\Delta t- \tau) A_n)\tau^{k-1} d \tau \label{phi_def1}\\
& = \dfrac{1}{(k-1)!} \int_0^1 \exp((1-\sigma)\Delta t A_n) \sigma^{k-1} d\sigma, \quad k=1,2,\dots,s\;. \label{phi_def2} 
\end{align}
By performing the change of variable $\Delta t - \tau = (1 - \sigma)\Delta t$, \eqref{phi_def2} can be obtained from \eqref{phi_def1}. 
%change of variables:
%$\Delta t - \tau = (1 - \sigma)\Delta t \Rightarrow \tau = \Delta t - (1- \sigma)\Delta t = \Delta t - \Delta t + \sigma \Delta t = \sigma \Delta t$
For $k=0$, we have that $\varphi_0(\Delta t A_n) = \exp(\Delta t A_n)$.

The parameter $s$ in \eqref{exp_int} indicates the number of stages of the method.  
% and also represents the order of accuracy \cite{hochbruck2010exponential}.  
By taking $s=1$, the exponential Euler method is obtained as
\begin{equation}
 T_{n+1} \approx \exp(\Delta t A_n) T_n + \Delta t \varphi_1(\Delta t A_n) R(T_n) = T_n + \Delta t \varphi_1(\Delta t A_n) F(T_n)\;, \label{exp_eul}
\end{equation}
where $\varphi_1(\Delta t A_n) = \int_0^1 \exp((1-\sigma) \Delta t A_n) d\sigma = 
(\Delta t A)^{-1}(\exp(\Delta t A_n) - I)$, with $I$ indicating the identity matrix. 
For a generic linear operator $A_n$, this method is first-order accurate, but if $A_n$ is the Jacobian matrix of the system evaluated at $t_n$, 
namely $A_n = F'(T_n)$, then the method becomes second-order accurate \cite{hochbruck2010exponential, luan2019further}. 
In this case, the scheme \eqref{exp_eul} is called Exponential Rosenbrock Euler. 
By taking $s=2$, a second-order single-step method with two stages is obtained as 
\begin{align}
& T_{n}^{(1^{st}\;stage)} = T_n + \Delta t \varphi_1(\Delta t A_n) F(T_n)\;, \label{stage1} \\
& T_{n+1} = T_{n}^{(1^{st}\;stage)} + \Delta t \varphi_2(\Delta t A_n) (R(T_n^{(1^{st}\;stage)}) - R(T_{n}))\;, 
\end{align}
where the first derivative of $N(T_n)$ is approximated with a first-order finite-difference approximation
\[
\left.\dfrac{\de R(T(t_n + \tau))}{\de \tau}\right\rvert_{\tau=0}
%\dfrac{\partial R(v_n)}{\partial \tau}
\approx
 \dfrac{R(T_n^{(1^{st}\;stage)}) - R(T_{n})}{\Delta t}\;,
\]
and $\varphi_2(\Delta t A_n) = \int_0^1 \exp((1-\sigma)\Delta t A_n) \sigma \, d \sigma
= (\Delta t A_n)^{-2} (\exp(\Delta t A_n) - \Delta t A_n - I)$. 
This scheme is known as ETD2-RK (\cite{hochbruck2005explicit, hochbruck2010exponential}).

%------before
% As described in \cite{HochbruckOstermann:2005}, this second-order method can be rewritten following a predictor-corrector approach so that 
% only the computation of $\varphi_1(\Delta t A_n)$ is required. 
% The two-stage scheme presented in \cite{HochbruckOstermann:2005} can be summarized as below:
% \begin{align}
% & T_{n}^{(1^{st}\;stage)} = T_n + \Delta t \varphi_1(\Delta t A_n) F(T_n)\;, \label{stage1_predictor_corrector} \\
% & T_{n+1} = T_{n}^{(1^{st}\;stage)} + \dfrac{1}{2} \Delta t \varphi_1(\Delta t A_n) (R(T_n^{(1^{st}\;stage)}) - R(T_{n}))\;. 
% \label{stage2_predictor_corrector}
% \end{align}
% Comparing these two stage methods, the predictor-corrector scheme fulfills the nonstiff order 
% conditions up to order two, and the stiff order conditions up to order one, whereas the \(\varphi_2\) version, ETD2-RK, 
% fulfills the stiff order conditions of order two, as showed in \cite[section~5.1, method~(5.3) and method~(5.4)]{HochbruckOstermann:2005}. 
% From an implementational point of view, the predictor-corrector scheme is simpler, since \(\varphi_2\) does not need to be 
% assembled, which helps us in the numerics when a scaling and squaring approached is used. 
%------after
\sara{
ETD2-RK fulfills the stiff order conditions of order two described in \cite{hochbruck2005explicit}, section~5.1.  
However, by relaxing some of these conditions, different schemes can be obtained that are more convenient for computations. 
For instance, the following two-stage predictor-corrector scheme makes use only of the $\varphi_1$ function. 
\begin{align}
& T_{n}^{(1^{st}\;stage)} = T_n + \Delta t \varphi_1(\Delta t A_n) F(T_n)\;, \label{stage1_predictor_corrector} \\
& T_{n+1} = T_{n}^{(1^{st}\;stage)} + \dfrac{1}{2} \Delta t \varphi_1(\Delta t A_n) (R(T_n^{(1^{st}\;stage)}) - R(T_{n}))\;. 
\label{stage2_predictor_corrector}
\end{align}
This scheme fulfills the nonstiff order conditions up to order two, and the stiff order conditions up to order one. 
From an implementational point of view, it is simpler than ETD2-RK, since \(\varphi_2\) does not need to be assembled, 
and a further advantage in terms of computational time can be obtained if the matrix $\varphi_1(\Delta t A_n)$ can be 
precomputed and stored efficiently. 
}

\subsection{Computation of the $\varphi$-functions}
The computation of the $\varphi$-functions is of major relevance for ETD methods. 
%The challenge is to efficiently compute matrix-vector products $\varphi_k(A) b$, 
%where $A$ is in $\mathbb{R}^{n \times n}$, $k$ is the index of the $\varphi$-function and $b \in \mathbb{R}^n$. 
Let us assume that $A$ is in $\mathbb{R}^{n \times n}$, where $b \in \mathbb{R}^n$ 
and $k \in \mathbb{N}$ is the index of the $\varphi$-function. 
Fist of all, let us consider the special case $k=0$, for which we have $\varphi_0(A) = \exp(A)$.
The most popular method for computing the matrix exponential $\exp(A)$ is the scaling and squaring algorithm 
\cite{moler1978nineteen, moler2003nineteen}.   
In \cite{higham2005scaling}, the scaling and squaring algorithm used by the MATLAB
function \textit{expm} is described, and a variation of this method that alleviates overscaling problems is presented in \cite{al2009new}. 
The Expokit package \cite{sidje1998expokit} uses a scaling and squaring method as well for computing matrix exponentials.  
% While the \textit{expm} algorithm computes the matrix exponential using the degree-13 diagonal Pad\'{e} approximant, 
% Expokit uses the degree-14 uniform rational Chebyshev approximant for symmetric negative-definite matrices and the degree-6 diagonal 
% Pad\'{e} approximant for general matrices. 
An alternative to scaling and squaring methods is given by Krylov subspace projections \cite{hochbruck1997krylov, saad1992analysis}. 
\sara{Krylov schemes can be used for evaluating any $\varphi_k$-function with index $k \ge 0$.} 
With Krylov schemes, matrix-vector products of the form $\varphi_k(A) b$ are computed, without 
the actual construction of the matrix $\varphi_k(A)$. 
To overcome some memory issues associated with standard Krylov methods, restarted schemes have been developed 
\cite{eiermann2006restarted}. The C/C++ library SLEPc \cite{roman2015slepc} uses the restarted algorithm presented in \cite{eiermann2006restarted}.  

% In the following, on overview about Krylov schemes is given, focusing on the restarted scheme presented in \cite{eiermann2006restarted}, 
% and scaling and squaring algorithm for indexes $k \ge 0$ is presented. The proposed scaling and squaring scheme is validated and its
% convergence is shown numerically for the case of $k = 1$.
In the following, a brief description of the algorithm presented in \cite{eiermann2006restarted} is given, since this method is used in the 
numerical results section, and scaling and squaring algorithm for indexes $k \ge 0$ is presented, for which an error estimate is derived.  

\subsubsection{Krylov subspace approximation}\label{restarted_kry}
The Krylov subspace approximation has been found very efficient to 
compute matrix-vector products $\varphi_k(A) b$ due to the optimality of the matrix polynomials produced by Krylov methods. 
The idea behind a Krylov subspace approach is to approximate the vector $\varphi_k(A)b$, which lives in $\mathbb{R}^n$, 
in a smaller space of dimension $m$. The Krylov approximation of $\varphi_k(A)b$ is based on an Arnoldi decomposition of $A$ 
\begin{equation}
A V_m = V_{m+1} \widetilde{H}_m = V_m H_m + \eta_{m+1,m} v_{m+1} e^T_m \;,
\end{equation}
where $V_m$ is a $m \times n$ matrix whose columns form an orthogonal basis for the Krylov subspace of dimension $m$  
\begin{equation*}
K_m(A, b) = \mbox{span}\{b, Ab, \dots, A^{m-1}b \}\;,
\end{equation*} 
$\widetilde{H}_m = [\eta_{i,j}]$ is an $(m + 1) \times m$ upper Hessenberg matrix, $H_m = [I_m \; 0]\widetilde{H}_m$ and $e_m$ denotes the $m$th
vector of the standard basis of $\mathbb{R}^m$. 
The matrices $V_m$ and $H_m$ are such that \begin{equation} H_m = V_m^T A V_m\;, \label {arnoldi_consequence}\end{equation} 
therefore $H_m$ can be seen as the projection of the action of $A$ to the Krylov subspace $K_m(A, b)$.   

Standard Krylov methods requires the storage of $V_m$, i.e. the storage of 
$m$ vectors of size $n$, which may be costly for moderate to large values of $m$. 
Therefore, a standard Krylov subspace method may be impractical because of the storage requirements associated with $V_m$.
To overcome this issue, the Arnoldi approximation could be modified in a way that allows the construction of successively 
better approximations of $\varphi_k(A)b$ based on a sequence of Krylov spaces of small dimension. 
Methods based on this approach are called restarted Krylov subspace methods, and they use an Arnoldi-like decomposition 
where a sequence of ascending (not necessarily orthonormal) basis vectors is introduced. 
We now focus on the restarted Krylov subspace algorithm presented in \cite{eiermann2006restarted}, 
which can be summarized as follows.
An Arnoldi-like decomposition of $A$ is constructed \begin{equation}
A \widehat{V}_p = \widehat{V}_p \widehat{H}_p + n_{p+1} v_{pm+1} e_{pm}^T \;, \label{arnoldi_like}                
\end{equation}
where $\widehat{V}_p=[V_1 \; V_2 \; \cdots \; V_p] \in \mathbb{R}^{n \times pm}$, 
$\widehat{H}_p = \begin{bmatrix}
H_1 & \\
E_2 & H_2 & \\
& \ddots & \ddots &\\
& & E_p & H_p
\end{bmatrix} \in \mathbb{R}^{pm \times pm}$, and \\
$E_j=\eta_j e_1 e_m^T \in \mathbb{R}^{m \times m}$, $j=2, \dots, p$. 
Since \eqref{arnoldi_like} is an Arnoldi-like decomposition, the columns of $\widehat{V}_k$ are only blockwise orthonormal. 
The matrices $V_1,V_2, \dots, V_p \in \mathbb{R}^{n \times m}$, $H_1,H_2, \dots, H_p \in \mathbb{R}^{m \times m}$ 
and the scalars $\eta_2, \eta_3, \dots, \eta_{p+1}$ 
are obtained from $p$ proper Arnoldi decompositions.
Setting $$\varphi_k(\widehat{H}_p) = \begin{bmatrix}
\varphi_k^{1,1} & \vspace{0.05 cm}\\
\varphi_k^{2,1} & \varphi_k^{2,2} & \\
\vdots & \vdots & \ddots & \\
\varphi_k^{p,1} & \varphi_k^{p,2} & \dots & \varphi_k^{p,p}
\end{bmatrix}, \quad \mbox{where } \; \varphi_k^{j,j} = \varphi_k(H_j), \;\; j=1, 2, \dots, p \;,$$
the approximation $\varphi_k^p$ of $\varphi_k(A)b$ after $p$ restart cycles is given by
\begin{equation}
\varphi_k^p = \widehat{V}_p \varphi_k(\widehat{H}_p)e_1 = [V_1 \; V_2 \; \cdots \; V_p] \varphi_k(\widehat{H}_p)e_1 = 
\sum_{j=1}^p V_j \varphi_k^{j,1} e_1 = \varphi_k^{p-1} + V_p \varphi_k^{p,1} e_1 \;.
\end{equation} 
Therefore, the approximation $\varphi_k^{p}$ is obtained from the previous approximation $\varphi_k^{p-1}$ plus a correction term. 
Only $\varphi_k^{p-1}$ has to be stored from the previous cycle of the algorithm, and the matrix $V_{p-1}$ 
(together with $V_{p-2}, \dots ,V_1$) can be discarded after computing $\varphi_k^{p-1}$.
%revised implementation
An efficient implementation of this algorithm can be found in \cite{afanasjew2008implementation}, where it is also shown 
how to stably compute the coefficient vector $\varphi_k^{p,1} e_1$. 

\subsubsection{Scaling and Squaring}\label{scaling}
We now present a scaling and squaring method for the computation of $\varphi_k(A)$. 
We are going to base the scaling and squaring method on the recursive relations for the
\(\varphi_k\)-functions; cf.~\cite{beylkin1998new, koikari2007error, SkaflestadWright:2009}. 
They are given as (see~\cite[Lemma~3]{SkaflestadWright:2009}):
\begin{align}
\label{eq:phi_simple}
&2^k\varphi_k(2A) = \exp(A)\varphi_k(A) + \sum_{j=0}^{k-1}\frac{\varphi_{k-j}(A)}{j!}  \\
\label{eq:phi_BKV}
=&
\begin{cases}
\varphi_{k/2}(A)^2 + 2 \sum_{j=0}^{k/2} \frac{1}{j!} \varphi_{k-j}(A)
&\text{for } k \text{ even,} \\
\varphi_{(k-1)/2}(A)\varphi_{(k+1)/2}(A) + 2 \sum_{j=0}^{(k-1)/2} \frac{\varphi_{k-j}(A)}{j!}
+ \frac{\varphi_{((k+1)/2)}(A)}{((k+1)/2)!}
&\text{for } k \text{ odd.} \\  
\end{cases}
\end{align}
Note that in both expressions the matrix function of the matrix \(2A\) is expressed as a
product of two matrix functions of \(A\) plus some correction terms.
The first identity is simpler, but the second one reduces the number of correction terms,
which is slightly more efficient for computations; however \cite{SkaflestadWright:2009}
mentions that the first form is more stable in numerical experiments. 

The first formula follows from the definition~\eqref{phi_def1} using \(\Delta{t} = 2\),
and a splitting of the integral into the
intervals \((0,1)\) and \((1,2)\)
\begin{align*}
2^k\varphi_k(2A) &= \int_0^1 \exp((2- \tau)A) \frac{\tau^{k-1}}{(k-1)!} \de \tau
                + \int_1^2 \exp((2-\tau)A) \frac{\tau^{k-1}}{(k-1)!} \de \tau \\
&= \exp(A) \int_0^1 \exp((1-\tau)A) \frac{\tau^{k-1}}{(k-1)!}
+ \int_0^1 \exp((1-\tau)A) \frac{(1+\tau)^{k-1}}{(k-1)!} \de \tau,
\end{align*}
where the second integral was shifted to \((0,1)\).
Now, using the binomial identity \((1+\tau)^{k-1}/(k-1)!
 = \sum_{j=0}^{k-1} \tau^{k-j-1} / ((k-1-j)!\,j!)\) in the second term and the definition of
\(\varphi_k\) and \(\varphi_{k-j}\) shows~\eqref{eq:phi_simple}.
Now, to derive the second identity, we use in a first step
\begin{multline*}
\exp(A)\varphi_k(A) = (I + \varphi_1(A)A)\varphi_k(A)
= \varphi_k(A) + \varphi_1(A) (A\varphi_k(A)) \\
= \varphi_k(A) + \varphi_1(A) (\varphi_{k-1}(A) - I/(k-1)!)
= \varphi_1(A) \varphi_{k-1}(A) + \varphi_k(A)  - \varphi_1(A)/(k-1)!.
\end{multline*}
Proceeding iteratively \(m\) times with the first term, we obtain
\[
\exp(A)\varphi_k(A) = \varphi_{m}(A)\varphi_{k-m}(A) +  \sum_{j=1}^m\frac{\varphi_{k-j}(A)}{j!} - \sum_{j=k-m-1}^{k-1} \frac{\varphi_{k-j}(A)}{j!}.
\]
using this identity for \(m = k/2\) and \(m = (k-1)/2\) respectively, we obtain~\eqref{eq:phi_BKV}.

In order to evaluate the matrix functions, we are going exploit these recursive relations
to reduce the computation to matrix functions of a scaled matrix \(A/2^M\).
Then, we use a polynomial approximation for this matrix. In order to ensure accuracy and
stability, we do this in the following way: First, let
\[
p_0^{0}(z) = T_{r}(z) + z^{r+1}q(z) = \exp(z) + \mathcal{O}(\abs{z}^{r+1})
\]
be a polynomial of degree \(N_p\) that approximates the exponential function up to order
\(r\). Here, \(T_r(z) = 1+z + \dotsc + z^r/r!\) is the Taylor approximation to \(\exp\),
and \(q(z)\) is a remainder.
Then, for all \(k \leq r+1\) we define the consistent polynomial approximations to the
\(\varphi_k\)-functions as
\begin{equation}
\label{eq:poly_init}
p_k^{0}(z) = z^{-k}\left(p_0^{M}(z) - T_k(z)\right)
 = \sum_{j=0}^{r-k} \frac{z^{j}}{(j-k)!} + z^{r+1-k}q(z)
\end{equation}
Now, we define the higher order recursive approximations for \(M>0\) to \(\varphi_k\)
using~\eqref{eq:phi_simple} as
\begin{equation}
\label{eq:poly_rec}
p_k^{M}(z) = 2^{-k}\left(p^{M-1}_0(z/2)p^{M-1}_k(z/2)
 + \sum_{j=0}^{k-1} \frac{p_{k-j}^{M-1}(z/2)}{j!} \right).
\end{equation}
This definition ensures that the resulting \(p_k^{r,M}\) functions have similar properties as
the original \(\varphi\) functions.
\begin{proposition}
For any \(M \geq 0\) and \(1\leq k\leq r+1\) there holds
\begin{align}
\label{eq:pk_properties_1}
p_k^{M}(z) &= z^{-1}\left(p_{k-1}^M(z) - 1/(k-1)!\right),\\
\label{eq:pk_properties}
p_k^{M}(z) &= z^{-k}\left(p^M_0(z) - T_{k-1}(z)\right).
\end{align}
\end{proposition}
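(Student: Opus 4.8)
The plan is to prove the two identities by induction on the number of squaring steps $M$, taking \eqref{eq:pk_properties} as the main statement and deducing \eqref{eq:pk_properties_1} from it at each level. Two observations make the induction run smoothly. First, the $k=0$ instance of the recursion \eqref{eq:poly_rec} is simply $p_0^M(z) = p_0^{M-1}(z/2)^2$, since the sum there is empty, so all higher-order polynomials are built from squares of $p_0^{M-1}(z/2)$. Second, \eqref{eq:poly_rec} is by construction the term-by-term polynomial analogue of the $\varphi$-function identity \eqref{eq:phi_simple}, so the algebra below mirrors, with finite Taylor polynomials in place of the integrals, the derivation of \eqref{eq:phi_simple} given earlier. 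Throughout I adopt the convention $T_{-1}\equiv 0$, so that \eqref{eq:pk_properties} holds trivially for $k=0$ and the edge cases $k=0,1$ need no separate treatment.

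For the base case $M=0$, identity \eqref{eq:pk_properties} holds by the definition \eqref{eq:poly_init} of $p_k^0$. Identity \eqref{eq:pk_properties_1} then follows from \eqref{eq:pk_properties}, at any level $M$, by a one-line computation: substituting $p_{k-1}^M(z) = z^{-(k-1)}\big(p_0^M(z) - T_{k-2}(z)\big)$ into the right-hand side of \eqref{eq:pk_properties_1}, factoring out $z^{-k}$, and using $T_{k-1}(z) = T_{k-2}(z) + z^{k-1}/(k-1)!$ to recognize the result as $z^{-k}\big(p_0^M(z) - T_{k-1}(z)\big) = p_k^M(z)$. Hence it suffices to establish \eqref{eq:pk_properties} at level $M$, assuming it holds for all $0\le k\le r+1$ at level $M-1$.

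For that step I would start from $z^{-k}\big(p_0^M(z) - T_{k-1}(z)\big) = z^{-k}\big(p_0^{M-1}(z/2)^2 - T_{k-1}(z)\big)$ and show it equals the right-hand side of \eqref{eq:poly_rec}. Inserting the inductive hypothesis $p_\ell^{M-1}(z/2) = (z/2)^{-\ell}\big(p_0^{M-1}(z/2) - T_{\ell-1}(z/2)\big)$ into the factors $p_k^{M-1}(z/2)$ and $p_{k-j}^{M-1}(z/2)$ on the right of \eqref{eq:poly_rec} and tracking the powers of two (so that $2^{-k}(z/2)^{-\ell}$ collapses to $z^{-k}(z/2)^{k-\ell}$), one pulls out the common factor $z^{-k}$. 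The terms carrying a single factor $p_0^{M-1}(z/2)$ then cancel, since $\sum_{j=0}^{k-1}(z/2)^j/j! = T_{k-1}(z/2)$, leaving $z^{-k}$ times $p_0^{M-1}(z/2)^2 - \sum_{j=0}^{k-1}\frac{(z/2)^j}{j!}\,T_{k-1-j}(z/2)$. The step therefore closes once one verifies the elementary identity $\sum_{j=0}^{k-1}\frac{(z/2)^j}{j!}\,T_{k-1-j}(z/2) = T_{k-1}(z)$, which is the polynomial shadow of the binomial step used earlier to derive \eqref{eq:phi_simple}: expanding the Cauchy product and applying $\sum_{j=0}^{i}\binom{i}{j}=2^i$ to the coefficient of $(z/2)^i$ turns $(z/2)^i$ into $z^i$.

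I expect the main obstacle to be purely the bookkeeping: the index and power-of-two accounting in the substitution into \eqref{eq:poly_rec}, checking that every index ($k$, $k-j$, and $0$) appearing on the right of \eqref{eq:poly_rec} stays in the admissible range $[0,r+1]$ when $1\le k\le r+1$ so the inductive hypothesis applies, and keeping the convention $T_{-1}\equiv 0$ so the $k=0,1$ cases are absorbed without fuss. No step requires an idea beyond those already used to establish \eqref{eq:phi_simple}.
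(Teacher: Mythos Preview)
Your proposal is correct. The approach is essentially the same as the paper's---induction on $M$ via the recursion~\eqref{eq:poly_rec}, with the binomial identity $\sum_{j}\binom{m}{j}=2^m$ doing the work---but you reverse the order: you take~\eqref{eq:pk_properties} as the inductive statement and read off~\eqref{eq:pk_properties_1} from it, whereas the paper proves~\eqref{eq:pk_properties_1} directly by computing $z\,p_k^M(z)$ from the recursion and then obtains~\eqref{eq:pk_properties} by iterating~\eqref{eq:pk_properties_1} down to $k=0$. Your route requires the Taylor-polynomial Cauchy product $\sum_{j=0}^{k-1}\frac{(z/2)^j}{j!}\,T_{k-1-j}(z/2)=T_{k-1}(z)$, while the paper's route uses only the scalar binomial sum $\sum_{j=0}^{k-1}\frac{1}{j!(k-1-j)!}=\frac{2^{k-1}}{(k-1)!}$; these are the same identity at different levels of packaging, and neither version is materially harder than the other.
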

\begin{proof}
For \(M = 0\) this holds according to definition. For higher \(M\) we follow an induction
argument.

First, we show~\eqref{eq:pk_properties_1}.
We use the recursive definition~\eqref{eq:poly_rec} to obtain
\[
z p_k^M(z) = \frac{1}{2^{k-1}}\left(p^{M-1}_0(z/2) (z/2) p^{M-1}_k(z/2) +
  \sum_{j=0}^{k-1} \frac{(z/2) p^{M-1}_{k-j}(z/2)}{j!}\right)
\]
and use the induction hypothesis to obtain
\begin{align*}
2^{k-1} z p_k^M(z) &= 
 p^{M-1}_0(z/2) \left(p_{k-1}^{M-1}(z/2) - \frac{1}{(k-1)!}\right)
 + \sum_{j=0}^{k-1} \frac{p^{M-1}_{k-j-1}(z/2) - 1/(k-j-1)!}{j!} \\
  &= p^{M-1}_0(z/2) p^{M-1}_{k-1}(z/2) + \sum_{j=0}^{k-2} \frac{p^{M-1}_{k-1-j}(z/2)}{j!}
    - \sum_{j=0}^{k-1}\frac{1}{j!(k-j-1)!} \\
  &= 2^{k-1}p^{M}_{k-1}(z) - 2^{k-1}/(k-1)!,
\end{align*}
using again the recursive definition~\eqref{eq:poly_rec} for \(k-1\) and the well-known
summation formula of binomial coefficients. Dividing by \(2^{k-1}z\) yields~\eqref{eq:pk_properties_1}.
Concerning~\eqref{eq:pk_properties}, we note that is suffices to repeatedly
apply~\eqref{eq:pk_properties_1} for \(k\), \(k-1\), ..., \(1\).
\end{proof}
\begin{remark}
The above result also implies that the recursion from~\eqref{eq:phi_BKV} is equivalent for
the construction of \(p_k^M\), since the equality~\eqref{eq:pk_properties_1} can be used to
convert between both versions.
\end{remark}

\begin{remark} 
In \eqref{stage1_pc_tracer}-\eqref{stage2_pc_tracer}, which is the ETD method we actually use to later solve the tracer equation, 
the only $\varphi_k$-function needed is $\varphi_1$. 
For this specific case, the recursion formula \eqref{eq:phi_simple} looks like 
\begin{equation*}
\varphi_1(A) = \frac{1}{2} \big(\exp \big(\frac{1}{2} A \big) + I\big) \varphi_1(\frac{1}{2} A)\;. 
\end{equation*} 
and \begin{equation*} 
p_1^{M}(z) = z^{-1}(p_0^{M}(z)-1)\;.       
\end{equation*} 
Using the above relations, the following algorithm computes $\varphi_1(A)$ from $\varphi_1(A/2^M)$.
\begin{algorithm}
 \begin{enumerate}
 \item[Step 1.] Define 
 %$$p_0^{0}(A/2^M) = \exp(A/2^M) + \mathcal{O}(\abs{A/2^M}^{r+1})\;,\;\;\;\; p_1^{0}(A) = \frac{A^{-1}}{2^M}(p_0^{0}(A/2^M) - I) \;.$$
 $$\sara{p_1^{0}(A) = \frac{A^{-1}}{2^M}(p_0^{0}(A/2^M) - I)\;,\;\;\;\; p_0^{0}(A/2^M) = \exp(A/2^M) + \mathcal{O}(\abs{A/2^M}^{r+1}) \;.}$$
 \item[Step 2.] For $0 \leq j \leq M$, Given $p_0^{j}(\frac{A}{2^{M+j-1}})$ and $p_1^{j}(\frac{A}{2^{M+j-1}})$, 
 compute $p_1^{j}(\frac{A}{2^{M+j}})$ as 
 $$p_1^{s,j}\Big(\frac{A}{2^{M+j}}\Big) = \frac{1}{2} \Big( p_0^{j}\Big(\frac{A}{2^{M+j-1}}\Big) + I \Big)
 p_1^{j}\big(\frac{A}{2^{M+j-1}}\big)\;.$$
 \item[Step 3.] Given $p_0^{j}(\frac{A}{2^{M+j-1}})$, compute $p_0^{j}(\frac{A}{2^{M+j}})$ as 
 $$p_0^{j}(\frac{A}{2^{M+j}}) = p_0^{j}(\frac{A}{2^{M+j-1}})) \; p_0^{j}(\frac{A}{2^{M+j-1}})\;.$$
 \end{enumerate}
 \caption{Computation of $\varphi_1(A)$} \label{phi1_scaling}
\end{algorithm}
\end{remark}

Finally, we provide an error estimate for the approximation \(\varphi_k \approx p_k^M\).
First, we consider the polynomial approximation on a subset of the complex plane. To prepare for the general
case, we let \(\Sigma \subset \mathbb{C}^{-} + \rho_0 \subset \mathbb{C}\) be some compact subset of the
negative complex plane shifted by \(\rho_0 \geq 0\), and assume that the
underlying polynomial fulfills the stability assumption
\begin{equation}
\label{eq:assu_poly_stab}
\abs{p_0^0(z)} \leq \exp(\tau\rho_0) \quad\text{for all } z \in \tau\Sigma,
\text{ where } 0 \leq \tau \leq 1.
\end{equation}
Moreover, since \(\Sigma\) is compact, there exists \(c_q > 0\) such that
\begin{equation}
\label{eq:poly_approx}
\abs{p_0^0(z) - \exp(z)} \leq c_q \abs{z}^{r+1} \quad\text{for all } z \in \Sigma.
\end{equation}
This will be the basis of the error estimates.
\begin{remark}
There are many situations where this assumption is fulfilled. In the case that
\(p_0^0(z)\) is the Taylor polynomial \(T_r(z)\) and \(\rho_0 = 0\), \(\Sigma\) can be
chosen as the intersection of the negative half-plane \(\mathbb{C}^-\) and the well-known
stability region of a Runge-Kutta scheme of order \(r\).
In this case, due to the
relation \(T_r(z) - \exp(z) = z^{r+1}\varphi_{r+1}(z)\) and \(\abs{\varphi_{r+1}(z)} \leq
\varphi_{r+1}(\operatorname{Re}z) \leq 1/(r+1)!\) we have \(c_q = 1/(r+1)!\). 
For an overview over known results on polynomials with optimal stability properties for
various forms of \(\Sigma\) and a computational approach to determine them, we refer to
Ketcheson~\cite{ketcheson2013optimal}.
\end{remark}
We first investigate the case of the matrix exponential \(k=0\).
\begin{proposition}
\label{prop:err_est_scsq}
Let the polynomial \(p_0^0\) fulfill~\eqref{eq:assu_poly_stab}. Then for all \(M \geq 0\) 
we have the stability
and approximation properties:
\begin{align}
\label{eq:stab_M}
\abs{p_0^M(z)} &\leq \exp(\tau\rho_0), \\
\label{eq:approx_M}
\abs{p_0^M(z) - \exp(z)} &\leq c_q \exp(\tau\rho_0) \abs{z}^{r+1} 2^{-Mr},
\end{align}
which hold for all \(z \in \tau \Sigma\), for the enlarged stability radii \(0 \leq \tau \leq 2^M\).
\end{proposition}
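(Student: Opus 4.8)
The plan is to prove the two bounds simultaneously by induction on \(M\). The key simplification is that for \(k=0\) the correction sum \(\sum_{j=0}^{k-1}\) in the recursion \eqref{eq:poly_rec} is empty, so the recursion collapses to the pure squaring step \(p_0^M(z) = p_0^{M-1}(z/2)^2\) (mirroring the \(k=0\) case of \eqref{eq:phi_simple}, where \(2^0\varphi_0(2A) = \exp(A)\varphi_0(A) = \exp(A)^2\)). For the base case \(M=0\), the stability claim \eqref{eq:stab_M} is exactly the assumption \eqref{eq:assu_poly_stab}, and \eqref{eq:approx_M} is \eqref{eq:poly_approx} combined with \(\exp(\tau\rho_0)\geq 1\) and \(2^{-0\cdot r}=1\). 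The one mild point in the base case is that \eqref{eq:poly_approx} is then needed on the scaled sets \(\tau\Sigma\) for \(0\leq\tau\leq 1\); this is immediate as soon as \(\Sigma\) is star-shaped about the origin (as is the case for the Runge--Kutta stability regions mentioned in the preceding remark), since then \(\tau\Sigma\subseteq\Sigma\) and \(\abs{p_0^0(z)-\exp(z)}\leq c_q\abs{z}^{r+1}\) persists on \(\tau\Sigma\).

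For the inductive step, assume both bounds at level \(M-1\) for all radii \(0\leq\tau\leq 2^{M-1}\), and fix \(z\in\tau\Sigma\) with \(0\leq\tau\leq 2^M\). Writing \(z=\tau w\) with \(w\in\Sigma\), we get \(z/2 = (\tau/2)w\in(\tau/2)\Sigma\) with \(0\leq\tau/2\leq 2^{M-1}\), so the induction hypothesis applies to \(z/2\) at level \(M-1\) with radius \(\tau/2\). Stability then follows immediately from squaring: \(\abs{p_0^M(z)} = \abs{p_0^{M-1}(z/2)}^2 \leq \exp((\tau/2)\rho_0)^2 = \exp(\tau\rho_0)\). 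For the approximation bound, put \(a = p_0^{M-1}(z/2)\) and \(b = \exp(z/2)\), so \(p_0^M(z) - \exp(z) = a^2-b^2 = (a-b)(a+b)\). The induction hypothesis gives \(\abs{a-b}\leq c_q\exp((\tau/2)\rho_0)\abs{z/2}^{r+1}2^{-(M-1)r}\) and \(\abs{a}\leq\exp((\tau/2)\rho_0)\), while \(\abs{b} = \exp(\operatorname{Re}(z/2))\leq\exp((\tau/2)\rho_0)\) because \(z/2\in(\tau/2)(\mathbb{C}^-+\rho_0)\) forces \(\operatorname{Re}(z/2)\leq(\tau/2)\rho_0\). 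Multiplying \(\abs{a-b}\) by \(\abs{a}+\abs{b}\leq 2\exp((\tau/2)\rho_0)\) and collecting the powers of two, the exponent is \(1-(r+1)-(M-1)r = -Mr\), which yields exactly \(\abs{p_0^M(z)-\exp(z)}\leq c_q\exp(\tau\rho_0)\abs{z}^{r+1}2^{-Mr}\).

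I expect no serious obstacle: once the \(k=0\) recursion is recognized as a squaring, the argument is a clean two-line induction. The points that require care are purely bookkeeping: tracking that halving \(z\) halves the admissible radius \(\tau\) (hence the doubling of the stability radius to \(2^M\)), keeping the factor \(\exp((\tau/2)\rho_0)\) attached through the \((a-b)(a+b)\) split by using the half-plane bound on \(\abs{\exp(z/2)}\), and checking the exponent identity \(1-(r+1)-(M-1)r = -Mr\). The only place a small extra assumption enters is the base case on \(\tau\Sigma\), handled by star-shapedness of \(\Sigma\). This \(k=0\) estimate is then the seed for the analogous bound for the \(\varphi_k\) with \(k\geq 1\), where in addition the nonempty correction sums in \eqref{eq:poly_rec} and the auxiliary identities of the preceding Proposition must be controlled.
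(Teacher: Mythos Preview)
Your proof is correct and follows essentially the same route as the paper: the same squaring recursion \(p_0^M(z)=p_0^{M-1}(z/2)^2\), the same \((a-b)(a+b)\) factorization for the error, and the same bookkeeping of the halved radius \(\tau/2\) and the half-plane bound on \(\abs{\exp(z/2)}\). The only cosmetic difference is that the paper dispatches stability in one line via \(p_0^M(z)=p_0^0(z/2^M)^{2^M}\) rather than by induction; your remark that the base case of~\eqref{eq:approx_M} on \(\tau\Sigma\) tacitly uses \(\tau\Sigma\subseteq\Sigma\) (star-shapedness) is a genuine subtlety the paper glosses over.
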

\begin{proof}
The first statement follows in a straightforward way from~\eqref{eq:assu_poly_stab}
since \(p_0^M(z) = p_0^0(z/2^M)^M\). For the second
statement, we use an induction argument, noting that the case \(M=0\) follows
directly from~\eqref{eq:poly_approx}. For \(M > 0\), we let \(z \in \tau\Sigma\) be
arbitrary we use~\eqref{eq:poly_rec} to obtain
\begin{align*}
\abs*{\exp(z) - p_0^M(z)}
&= \abs*{\left(\exp(z/2) + p_0^{M-1}(z/2)\right)\left(\exp(z/2) - p_0^{M-1}(z/2)\right)} \\
&\leq 2 \exp(\tau\rho_0/2)\, c_q\exp(\tau\rho_0/2) \abs*{z/2}^{r+1} 2^{-(M-1)r} \\
&= \exp(\tau\rho_0) \abs*{z}^{r+1} 2^{-Mr},
\end{align*}
where we have used \(z/2 \in (\tau/2)\Sigma\) and the induction hypothesis
together with~\eqref{eq:stab_M} and \(\abs{\exp(z/2)} =
\exp(\operatorname{Re(z)/2}) \leq \exp(\tau\rho_0/2)\) in the second step.
\end{proof}
A similar estimate follows now also for the higher \(\varphi\)-functions.
\begin{corollary}
\label{cor:err_est_scsq}
Let \(p_0^0\) fulfill~\eqref{eq:assu_poly_stab}. Then for \(k \leq r+1\) we have
\[
\abs{p_k^M(z) - \varphi_k(z)} \leq c_q \exp(\tau\rho_0) \abs{z}^{r+1-k} 2^{-Mr}.
\]
for all \(z \in \tau\Sigma\) and \(0\leq \tau \leq 2^M\).
\end{corollary}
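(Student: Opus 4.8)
The plan is to reduce the general-\(k\) estimate to the case \(k=0\) that is already handled in Proposition~\ref{prop:err_est_scsq}, by exploiting the closed-form representation~\eqref{eq:pk_properties} of \(p_k^M\) together with the analogous exact identity for \(\varphi_k\). Concretely, the classical relation
\[
\varphi_k(z) = z^{-k}\bigl(\exp(z) - T_{k-1}(z)\bigr)
\]
holds; it follows either directly from the integral definition~\eqref{phi_def2}, or, parallel to~\eqref{eq:pk_properties_1}, by iterating the recursion \(\varphi_k(z) = z^{-1}(\varphi_{k-1}(z) - 1/(k-1)!)\) down from \(\varphi_0 = \exp\). Subtracting this from~\eqref{eq:pk_properties}, the two Taylor polynomials \(T_{k-1}\) cancel and one is left with the identity
\[
p_k^M(z) - \varphi_k(z) = z^{-k}\bigl(p_0^M(z) - \exp(z)\bigr).
\]

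From here the estimate is immediate: for \(z \in \tau\Sigma\) with \(0 \le \tau \le 2^M\) I would take absolute values and insert the approximation bound~\eqref{eq:approx_M} from Proposition~\ref{prop:err_est_scsq}, giving \(\abs{p_k^M(z) - \varphi_k(z)} = \abs{z}^{-k}\abs{p_0^M(z) - \exp(z)} \le \abs{z}^{-k} c_q \exp(\tau\rho_0)\abs{z}^{r+1} 2^{-Mr} = c_q \exp(\tau\rho_0)\abs{z}^{r+1-k} 2^{-Mr}\), which is exactly the claimed inequality. The hypothesis \(k \le r+1\) enters precisely to ensure \(r+1-k \ge 0\), so that the bound stays finite on the compact set \(\tau\Sigma\).

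The only point requiring care — and the one I would flag as the main (though minor) obstacle — is the apparent singularity of the factor \(z^{-k}\) at \(z=0\). Here one observes that \(p_0^M - \exp\) is entire (a polynomial minus \(\exp\)) and, by the construction of \(p_0^0\) via~\eqref{eq:poly_approx} together with the recursion~\eqref{eq:poly_rec}, vanishes to order at least \(r+1 \ge k\) at the origin; hence \(z^{-k}(p_0^M(z) - \exp(z))\) extends to an entire function, both \(p_k^M\) and \(\varphi_k\) are entire, and the displayed identity and the resulting estimate hold at \(z=0\) (when \(0 \in \tau\Sigma\)) by continuity. No fresh induction over \(M\) or \(k\) is needed, since all of it is already packaged in Proposition~\ref{prop:err_est_scsq} and in the Proposition establishing~\eqref{eq:pk_properties_1}--\eqref{eq:pk_properties}. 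As an aside, one could instead re-run the induction of Proposition~\ref{prop:err_est_scsq} directly on \(\abs{p_k^M - \varphi_k}\) using~\eqref{eq:poly_rec} and the \(\varphi\)-recursion~\eqref{eq:phi_simple}, but the reduction above is shorter and avoids tracking the correction-term sums.
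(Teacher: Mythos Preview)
Your proof is correct and follows exactly the same route as the paper: use~\eqref{eq:pk_properties} together with the analogous identity for \(\varphi_k\) to write \(p_k^M(z) - \varphi_k(z) = z^{-k}(p_0^M(z) - \exp(z))\), then apply Proposition~\ref{prop:err_est_scsq}. Your additional remark on the removable singularity at \(z=0\) is a nice point of rigor that the paper leaves implicit.
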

\begin{proof}
It suffices to use~\eqref{eq:pk_properties} to write
\[
p_k^M(z) - \varphi_k(z) = z^{-k}(p_0^M - \exp(z)),
\]
where we can apply Proposition~\ref{prop:err_est_scsq}.
\end{proof}

For the case of a diagonalizable matrix, one can now obtain an error
estimate in the standard way.
\begin{corollary}
\label{cor:err_est_matr}
Let \(A = V D V^{-1}\), where \(D\) is a diagonal matrix with the eigenvalues \(\sigma(A)
\subset 2^M \Sigma\) on the diagonal. Then if~\eqref{eq:assu_poly_stab} holds, we have
\[
\norm*{\varphi_k(A) - p_k^M(A)}_2 \leq  c_q \operatorname{cond}(V) \exp(\rho_0)
\abs{A}^{r+1-k} \, 2^{-Mr},
\]
where \(\norm{\cdot}_2\) is the matrix norm induced by the Euclidean norm, \(\abs{A}\) is
the spectral radius of \(A\), and
\(\operatorname{cond}(V) = \norm{V}_2\norm{V^{-1}}_2\) is the condition number of \(V\).
\end{corollary}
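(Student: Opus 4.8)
The plan is to transport the scalar error bound of Corollary~\ref{cor:err_est_scsq} to the matrix level by diagonalization, which is available precisely because $A = VDV^{-1}$ is given. Since $\varphi_k$ is entire and $p_k^M$ is a polynomial, both functions respect similarity transforms and act entrywise on diagonal matrices, so $\varphi_k(A) - p_k^M(A) = V\,(\varphi_k(D) - p_k^M(D))\,V^{-1}$ with $\varphi_k(D) - p_k^M(D) = \operatorname{diag}\big(\varphi_k(\lambda) - p_k^M(\lambda) : \lambda \in \sigma(A)\big)$. Taking the Euclidean operator norm and using submultiplicativity then gives
\[
\norm{\varphi_k(A) - p_k^M(A)}_2 = \norm*{V\,(\varphi_k(D)-p_k^M(D))\,V^{-1}}_2 \le \operatorname{cond}(V)\,\norm*{\varphi_k(D)-p_k^M(D)}_2,
\]
and since the singular values of a diagonal matrix are the moduli of its diagonal entries, $\norm{\varphi_k(D)-p_k^M(D)}_2 = \max_{\lambda\in\sigma(A)}\abs{\varphi_k(\lambda)-p_k^M(\lambda)}$.

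It then remains to estimate this maximum pointwise. By hypothesis $\sigma(A)\subset 2^M\Sigma$, so every eigenvalue $\lambda$ lies in $\tau\Sigma$ with $\tau = 2^M$, which is exactly the largest shift admitted in Corollary~\ref{cor:err_est_scsq}; since $k\le r+1$ the exponent $r+1-k$ is nonnegative and the corollary applies, giving $\abs{\varphi_k(\lambda)-p_k^M(\lambda)} \le c_q\,\exp(\tau\rho_0)\,\abs{\lambda}^{r+1-k}\,2^{-Mr}$ for each $\lambda\in\sigma(A)$. Bounding $\abs{\lambda}$ by the spectral radius $\abs{A}$ and substituting into the display above yields the asserted estimate.

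I do not expect a genuine obstacle: once the functional-calculus identity $f(VDV^{-1}) = Vf(D)V^{-1}$ is invoked and one uses that $\norm{\cdot}_2$ reduces to the maximum modulus on a diagonal matrix, the rest is bookkeeping. The one point that deserves a moment's care is the shift parameter $\tau$ in the application of Corollary~\ref{cor:err_est_scsq}: because the hypothesis only places $\sigma(A)$ in the \emph{enlarged} region $2^M\Sigma$ (as is natural, since $A$ itself is the ``unscaled'' matrix whose $2^M$-th scaling lies in $\Sigma$), one must use $\tau = 2^M$, so the exponential prefactor produced by the argument is $\exp(2^M\rho_0)$; this reduces to the stated $\exp(\rho_0) = 1$ in the case $\rho_0 = 0$, which is the situation of interest in practice (Taylor-polynomial initializer with the Runge--Kutta stability region).
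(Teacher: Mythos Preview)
Your argument is exactly the standard diagonalization argument the paper has in mind; the paper does not spell out a proof for this corollary beyond remarking that ``one can now obtain an error estimate in the standard way,'' and what you wrote is precisely that standard way: pass to the diagonal via $f(VDV^{-1})=Vf(D)V^{-1}$, take the maximum over eigenvalues, and invoke Corollary~\ref{cor:err_est_scsq} pointwise.

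Your observation about the prefactor is well taken. With the hypothesis $\sigma(A)\subset 2^M\Sigma$ one is forced to apply Corollary~\ref{cor:err_est_scsq} at $\tau=2^M$, which produces $\exp(2^M\rho_0)$ rather than the $\exp(\rho_0)$ appearing in the statement. The paper's stated constant therefore appears to be a slip; as you note, in the intended application $\rho_0=0$ and the discrepancy is immaterial. This is not a gap in your proof but a minor inconsistency in the corollary as stated.
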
 

% \sara{
% \remark{
% In the concrete case for this paper, \(A\) will be a finite difference approximation to a
% one-dimensional convection-diffusion operator. Let us call such an approximation $J^z$. 
% For a convection-diffusion operator, diagonalization is a delicate issue, and often the condition 
% number of \(V\) can be very large, such that the bound from Corollary~\ref{cor:err_est_matr} may become meaningless
% (\cite{trefethen1997pseudospectra}).
% However, in the present case, we can expect \(J^z\) be close to a skew-symmetric matrix if the vertical velocity \(w\), 
% which is zero at the top and bottom boundary, varies slowly with respect to depth. In fact, neglecting
% diffusion terms, we have
% \(
% (J^z u, \phi) + (u, J^z \phi) = ((\partial_z w)\, u, \phi).
% \)
% Thus, if \((\partial_z w)\) is small, we can expect \(J^z\) to be diagonalizable with a
% close to orthogonal matrix \(V\). Moreover, \(\rho_0\) will behave like
% \(\sim \sup (\partial_z w) \Delta{t}\), when approximations to \(\varphi_k(\Delta{t} J^z)\)
% are computed. 
% }
% }

\section{ETD Solver with Operator Splitting for the Tracer Equation} \label{solver}

In this section, the ETD solver developed for the tracer equation is presented. 
We focus on the description of the linear operator chosen, whose structure strictly depends on the physics of the problem, 
i.e. processes occurring at different time-scales. 
%Ocean mixing is the key process to maintain the actual ocean circulation. 
In the ocean, most processes occur at large scales, as forcing by the wind or by currents in the upper layers of the ocean, 
but in many relevant situations (e.g., eddies, cold water flowing over warm water and vice versa), the vertical transport and mixing 
of tracers follow much faster time-scales. 
Hence, semi-implicit methods are needed for the appropriate inclusion of vertical transport and mixing in the model using 
time-steps that are not excessively small. 
Using an ETD scheme where the linear operator is split into a vertical and a horizontal part and the vertical part is treated exponentially, 
larger time steps can be taken and still have an appropriate description of the processes happening in the vertical. 
%For the remaining terms of the tracer equation an explicit method can be adopted. 
While models like MPAS-Ocean and POP treat only the tracer vertical diffusion implicitly, we treat with a matrix exponential all vertical terms 
of the tracer equation, i.e. vertical advection and diffusion. 

\subsection{Operator Splitting} \label{jac_splitting}
Let us rewrite the tracer equation \eqref{tracer_hor_discr} as equation \eqref{splitting1}, that is
\begin{equation}
\partial_t T = F(T) = J_n T + R(T), \label{tracer_split1}
\end{equation}
where $T=T(t)$ denotes the vector of tracer values for $t \in [t_n,t_{n+1}]$, and the linear operator is the Jacobian 
of the system evaluated at $t_n$.
%As mentioned in section \ref{tracer_eq_section}, only active tracers, like temperature and salinity, are considered in this work. 
Since we are assuming a zero forcing term, the tracer equation is linear in $T$, which implies that the nonlinear 
reminder $R(T)$ is zero, and so \eqref{tracer_split1} becomes 
\begin{equation} \partial_t T = F(T) = J_n T\;.\end{equation} 
At this point, we split the Jacobian $J_n$ into a vertical and a horizontal part, i.e.   
\begin{equation} J_n = J_n^z + J_n^x\;, \label{oper_split} \end{equation} 
where $J_n^z$ contains the derivatives of the vertical terms only, and $J_n^x$ contains the derivatives of the horizontal terms only. 
Thus, 
\begin{equation} \partial_t T = F(T) = J_n^z T + J_n^x T\;. \label{tracer_split2} \end{equation}
An exponential integrator can be applied to solve \eqref{tracer_split2}.
Both terms $J_n^z T$ and $J_n^x T$ are linear, so either one of them can be considered as
the linear part of the equation or the remainder. 
Since the vertical processes are those occurring at fast time-scales, the vertical terms have to be treated with a matrix exponential.   
For this reason, the term $J_n^z T$ is interpreted as the linear part, 
while $J_n^x T$ is the remainder, which is linear as well in this case. 
Thus, we construct an exponential time differencing solver where the vertical terms 
are treated implicitly with a matrix exponential, whereas the horizontal are dealt with in an explicit way. 
% Using exponential Euler, the solution of \eqref{tracer_hor_discr} is given by
% \begin{equation}T_{n+1}=T_n + \Delta t \varphi_1(\Delta t J_n^z)F(T_n)\;.\end{equation}
% With this method, which is first-order accurate, we simply neglect $J_n^x T$. 
The scheme has the following form: 
\begin{align}
& T_{n}^{1st\,stage} = T_n + \Delta t \varphi_1(\Delta t J_n^z)F(T_n)\;, \label{stage1_pc_tracer}\\
& T_{n+1} = T_{n}^{1st\,stage} + 
\dfrac{1}{2}\Delta t \varphi_1(\Delta t J_n^z) (R^{1st\,stage}_{n}-R_n)\;, \label{stage2_pc_tracer}
\end{align}
i.e. it is a two-stage ETD method following a predictor-corrector.  
The remainders are defined as $R_n = F(T_n) - J_n^z T_n = J_n^x T_n$ and 
$R^{1st\,stage}_{n} = F(T_{n}^{1st\,stage}) - J_n^z T_{n}^{1st\,stage} = J_n^x T_{n}^{1st\,stage}$.  
Both $R_n$ and $R^{1st\,stage}_n$ take into account only the contributions from the horizontal terms, but 
computationally $J_n^x$ is never built. $R_n$ is obtained for free from the construction of the right-hand side $F(T_n)$, which is needed 
in the first stage, and the construction of $R^{1st\,stage}_n$ is cheap because we do not need to construct the full right-hand side $F(T_n^{1st\,stage})$ but only its horizontal terms evaluated at $T_n^{1st\,stage}$. 

Please note that we could have chosen to treat the full Jacobian $J_n$ with a matrix exponential, 
and this would have given us the exact solution (up to machine precision) of equation \eqref{tracer_hor_discr}, since it is 
linear in $T$. From a computational point of view, this strategy is not appealing since the construction of $J_n$ 
might be very expensive in terms of time and computational cost. 
\sara{Given that the fast vertical scales are those responsible for the most restrictive advective CFL, one could use a domain 
decomposition approach in the horizontal. However, the advantage given by the domain decomposition would be lost if the 
whole Jacobian is treated with a matrix exponential, because all the domains would be coupled.} 
We think that the exponential Euler method would not have been a valid choice either, since it is likely too inaccurate to be useful. 

%------remark removed
% \remark{When a Krylov method is used for the computation of $\varphi_k$-functions, there is no reason to prefer the predictor-corrector 
% approach from a theoretical point of view, but computationally we had some stability problems using $\varphi_2$. 
% We think the issues encountered are related to the accuracy in the computation of $\varphi_2$, which, nevertheless, also requires 
% additional memory storage and so it is not ideal when a scaling and squaring approached is used either.
% For these reasons, we decided to consider only the predictor-corrector scheme.}

\subsection{Structure of the matrix $J_n^z$}
Another advantage of the operator splitting \eqref{oper_split} relies in the structure of the matrix $J_n^z$. 
This matrix contains the derivatives of the vertical terms only, and its dimension is 
$N_z$ $\times$ $N_x$,
where $N_z$ indicates the total number of vertical layers, and $N_x$ is the total number 
of elements in the horizontal discretization, i.e. the total number of Voronoi cells.
The entries of $J_n^z$ can be ordered so that the derivatives associated with the same horizontal element form a submatrix of dimension 
$N_z \times N_z$ . This is possible since, for every layer, there is no interaction  
between the derivatives of the vertical terms associated with two different Voronoi cells. 
Therefore, $J_n^z$ has a block diagonal structure, 
\begin{equation}
J_n^z = \begin{bmatrix}
J_n^{z,1} & 0 & \dots & 0\\
0 & J_n^{z,2} & \dots & 0\\
\vdots & \dots & \dots & \vdots\\
0 & 0 & \dots & J_n^{z,N_x}
\end{bmatrix}
\end{equation}
where each block $J_n^{z,i}$ represents the contributions (derivatives of the vertical terms) given by a single element $i$ 
in the horizontal discretization, and the dimension of each block depends on the number of vertical layers. 
Figure \ref{block_diag} shows the structure of $J_n^z$ and each 
diagonal block in a simple case with 4 Voronoi cells and 4 vertical layers. 
Since we are dealing with a one dimensional domain in the horizontal, the diagonal blocks are banded matrices; in particular 
they are tridiagonal matrices because we use up-winding in the \sara{horizontal} discretization. 
\begin{figure}  
\begin{center}
\includegraphics[scale=0.5]{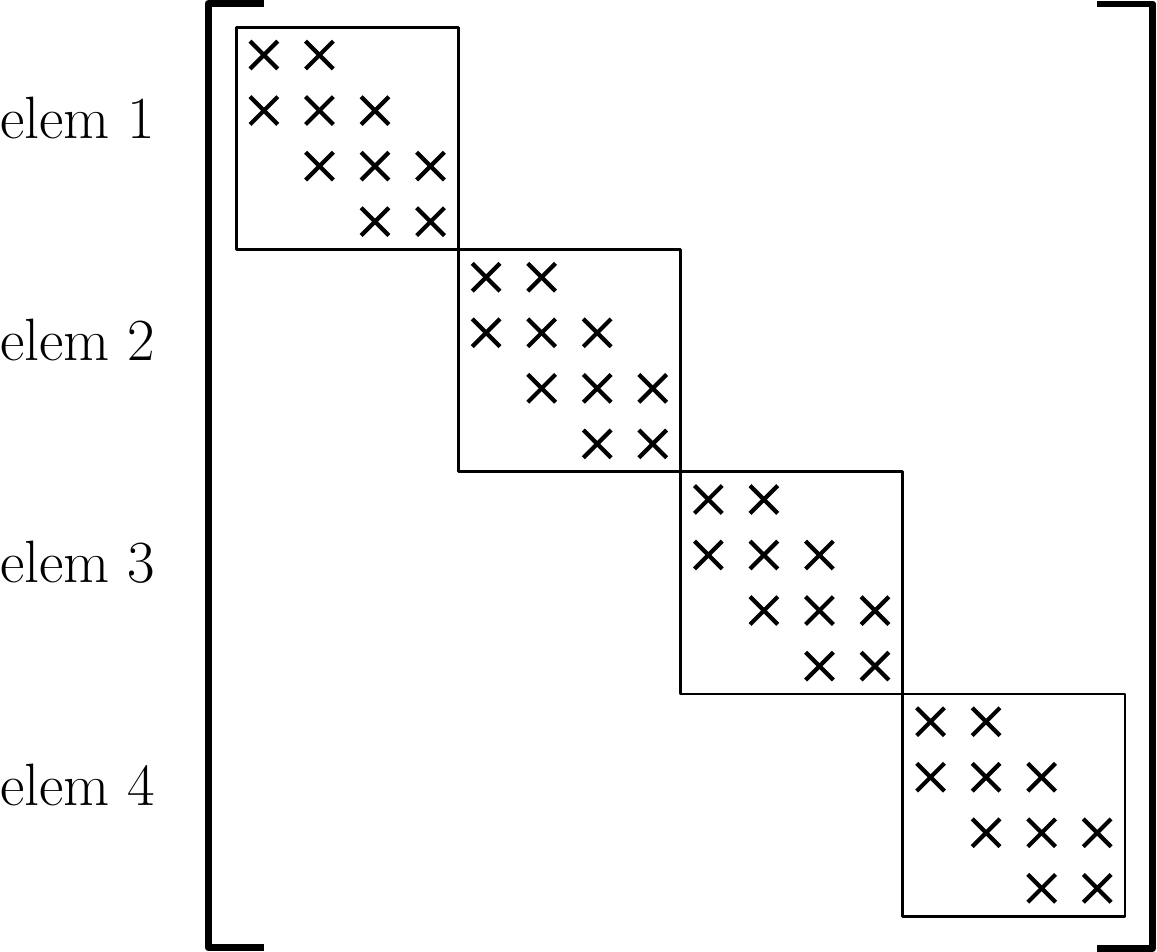}
\caption{Block diagonal structure of $J_n^z$ for a simplified case with 4 horizontal elements and 4 vertical layers.} \label{block_diag}
\end{center}
\end{figure}

This block diagonal structure gives several advantages. 
First, $\varphi_1(\Delta t J_n^z)$ 
%does not have to be built nor stored globally.
%Depending on the ETD method used and on the stage we are at, let the vector $b$ be either $F(T_n)$ or $(R_n^{1st\;stage}-R_n)$. 
%Such a vector has $N_h$ entries, i.e.  
%$$b=\begin{bmatrix}
%b_1 \\ b_2 \\ \vdots \\ b_{N_h}     
%\end{bmatrix} \;.$$
can be written as 
\begin{equation}\varphi_1(\Delta t J_n^z) = \begin{bmatrix}
\varphi_1(\Delta t J_n^{z,1}) & 0 & \dots & 0\\
0 & \varphi_1(\Delta tJ_n^{z,2}) & \dots & 0\\
\vdots & \dots & \dots & \vdots\\
0 & 0 & \dots & \varphi_1(\Delta tJ_n^{z,N_h})
\end{bmatrix} \;. \label{parallel} \end{equation}
Hence, for every Voronoi cell $i$, the smaller matrices $\varphi_1(\Delta t J_n^{z,i})$ can be constructed one at the time, 
and the global matrix $\varphi_1(\Delta t J_n^z)$ is never assembled.
The evaluation of small matrices $\varphi_1(\Delta t J_n^{z,i})$ instead of a large one can significantly speed up the calculations, 
especially in a parallel setting. 
Expression \eqref{parallel} makes indeed the computation of $\varphi_1(\Delta t J_n^z)$ easy to implement in a parallel environment. 
Ideally, each matrix $\varphi_1(\Delta t J_n^{z,i})$ could be assigned to a different processor, if $N_x$ processors were available. 
This straightforward parallelization would be of great advantage to speed up the computational time, which is one of the major concerns in 
ocean modeling.

Expression \eqref{parallel} holds for any $\varphi_k$ with $k \ge 0$ and for any block diagonal matrix, as shown in the following proposition.
\begin{proposition}
Let $D$ be a block diagonal matrix $$D=\begin{bmatrix} D_1 & 0 & \dots & 0\\
0 & D_2 & \ddots & 0\\
\vdots & \ddots & \ddots & \vdots\\
0 & 0 & \dots & D_m \end{bmatrix}\;,$$
and let $\varphi_k$ be a $\varphi$-function of index $k$ defined as in \eqref{phi_def1} (or equivalently \eqref{phi_def2}). Then,
\begin{equation}
\label{eq:phi_blkdiag}
\varphi_k(D) = \begin{bmatrix}
\varphi_k(D_1) & 0 & \dots & 0\\
0 & \varphi_k(D_2) & \ddots & 0\\
\vdots & \ddots & \ddots & \vdots\\
0 & 0 & \dots & \varphi_k(D_m) \end{bmatrix} \;.
\end{equation}
\end{proposition}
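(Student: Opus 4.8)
The plan is to reduce everything to the integral definition~\eqref{phi_def2} of $\varphi_k$ together with the elementary fact that the matrix exponential respects block-diagonal structure. First I would observe that if $D$ is block diagonal with blocks $D_1,\dots,D_m$, then for every $n\in\mathbb{N}$ the power $D^n$ is again block diagonal with blocks $D_1^n,\dots,D_m^n$; this is immediate by induction on $n$ using the rule for multiplying block matrices, since a product of two block-diagonal matrices with compatible partitions is block diagonal (all off-diagonal blocks stay zero). Consequently, for any $t\in\mathbb{R}$ the exponential series $\exp(tD)=\sum_{n\ge 0} t^n D^n/n!$ converges entrywise, and since each partial sum is block diagonal with $i$-th diagonal block $\sum_{n\le N} t^n D_i^n/n!$, the limit $\exp(tD)$ is block diagonal with $i$-th diagonal block $\exp(tD_i)$. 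This already settles the case $k=0$, because $\varphi_0(D)=\exp(D)$.

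For $k\ge 1$ I would use~\eqref{phi_def2}, namely $\varphi_k(D)=\frac{1}{(k-1)!}\int_0^1 \exp((1-\sigma)D)\,\sigma^{k-1}\,d\sigma$, where the integral is taken entrywise. By the previous step, for each fixed $\sigma\in[0,1]$ the integrand is a block-diagonal matrix whose $i$-th diagonal block equals $\frac{1}{(k-1)!}\exp((1-\sigma)D_i)\,\sigma^{k-1}$ and whose off-diagonal blocks vanish identically. Since entrywise integration of a continuous matrix-valued function maps any identically-zero entry to zero, $\varphi_k(D)$ is block diagonal, and its $i$-th diagonal block is exactly $\frac{1}{(k-1)!}\int_0^1 \exp((1-\sigma)D_i)\,\sigma^{k-1}\,d\sigma=\varphi_k(D_i)$, which is~\eqref{eq:phi_blkdiag}.

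There is essentially no serious obstacle here; the only points deserving a word of care are the claim that taking products, power series and Riemann integrals of block-diagonal matrices leaves the off-diagonal blocks zero, and the compatibility of these limiting operations with the block decomposition — both of which hold because all the operations act entrywise and the relevant series converge. An equally short alternative route I could take is to note that a block-diagonal matrix is the direct sum $D=\bigoplus_{i=1}^m D_i$ acting on $\bigoplus_{i=1}^m \mathbb{R}^{n_i}$, that each summand $\mathbb{R}^{n_i}$ is a $D$-invariant subspace, and that any matrix function defined through a convergent power series or through a Riemann integral of such — in particular each $\varphi_k$ by~\eqref{phi_def1}–\eqref{phi_def2} — therefore restricts on the $i$-th summand to the same function applied to $D_i$, giving~\eqref{eq:phi_blkdiag}.
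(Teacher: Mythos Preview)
Your proof is correct and follows essentially the same approach as the paper: use the integral definition~\eqref{phi_def2}, observe that $\exp((1-\sigma)D)$ is block diagonal with blocks $\exp((1-\sigma)D_i)$, and then integrate entrywise. You supply slightly more detail (the inductive argument for powers, the explicit treatment of $k=0$, and the alternative direct-sum viewpoint), but the substance is identical.
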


\begin{proof} Using definition \eqref{phi_def2} and the block diagonal structure of $D$, we have 
\begin{align*}\varphi_k(D) & = \dfrac{1}{(k-1)!} \int_0^1 \exp((1-\sigma)D) \sigma^{k-1} d\sigma \\
& = \dfrac{1}{(k-1)!} \int_0^1 \begin{bmatrix}
\exp((1-\sigma)D_1) & 0 & \dots & 0\\
0 & \exp((1-\sigma)D_2) & \ddots & 0\\
\vdots & \ddots & \ddots & \vdots\\
0 & 0 & \dots & \;\exp((1-\sigma)D_m)\\
\end{bmatrix}
\sigma^{k-1} \de\sigma.
% \\
% & = \begin{bmatrix}
% \frac{1}{(k-1)!} \int_0^1 \exp((1-\sigma)D_1) \sigma^{k-1} d\sigma & 0 & \dots & 0\\
% 0 & \frac{1}{(k-1)!} \int_0^1 \exp((1-\sigma)D_2) \sigma^{k-1} d\sigma & \dots & 0\\
% \vdots & \dots & \dots & \vdots\\
% 0 & 0 & \dots & \frac{1}{(k-1)!} \int_0^1 \exp((1-\sigma)D_m \sigma^{k-1} d\sigma)\\
% \end{bmatrix}
\end{align*}
Applying the integral to every entry of the matrix we have that the diagonal blocks become  
\begin{align*}
\frac{1}{(k-1)!} \int_0^1 \exp((1-\sigma) & D_i) \sigma^{k-1} \de\sigma = \varphi_k(D_i)\;,
\end{align*}
for all \(i = 1,\ldots,N_h\).
Thus, \eqref{eq:phi_blkdiag} has been verified.
\end{proof} 

Another benefit given by \eqref{parallel} relies in the different accuracy that can be chosen in the approximation of each matrix 
$\varphi_1(\Delta t J_n^{z,i})$. If a Krylov scheme is used, the dimension of the Krylov space chosen to approximate 
$\varphi_1(\Delta t J_n^{z,i})b_i$ can vary for every $i$, depending on the physics of the problem. 
A higher dimension (so more Krylov vectors) can be used for those horizontal elements that experience a great vertical 
mixing or transport, while a lower dimension can be adopted to approximate $\varphi_1(\Delta t J_n^{z,i})b_i$ for the remaining elements $i$. 
In most regions of the ocean, mixing is relatively small, and considering a lower subspace dimension for the cells that 
discretize those areas would considerably speed up the computational time without jeopardizing the accuracy of the approximation. 
Using a restarted Krylov method, the number of Krylov vectors can be significantly smaller than the dimension of $J_n^{z,i}$, 
i.e. $N_x$, this number can change at each restart.  
%For each Voronoi cell $i$, at each restart, the same number of Krylov vectors is used. 
%For the numerical tests in section \ref{performance}, different dimensions for the Krylov subspaces will be used depending on 
%the values of the velocity field at every horizontal element. 
Similarly, if the scaling and squaring method described in section \ref{scaling} is adopted,
a different value of $M$ can be used for each matrix $\Delta t J_n^{z,i}$. For those Voronoi cells experiencing great 
vertical mixing or transport, the spectral radius of the corresponding $\Delta t J_n^{z,i}$ would be larger than for those cells 
impacted by relatively small mixing or transport. Hence, smaller values of $M$ are needed for the elements not experiencing 
processes occurring at fast time-scales, and since such elements are the great majority, significant speed ups can be obtained. 

\subsection{Computational Complexity} \label{com_com}

We complete this section with a brief discussion of the computational cost depending on the approach chosen 
for computing the $\varphi_k$-functions.  
For dense matrices, the scaling and squaring approach is \(\mathcal{O}(N^3_z)\) and probably expensive if $N_z$ is too large. 
\sara{However, the discrete matrices arising from one dimensional convection diffusion problems are banded, and tridiagonal for 
first and second order schemes, such as upwinding and central flux approximations. Therefore, the matrices resulting from the 
scaling and squaring approximation will also have a small bandwidth:}
for two banded matrices with bandwidth \(b\), the matrix product will be banded with bandwidth
\(2b\), and the product will cost \(\mathcal{O}((1+b)^2 N_z)\). 
%-------before
% Thus, doing some simple calculations, the overall cost of the scaling and squaring approach applied to a
% tridiagonal matrix is \(\mathcal{O}((r2^M)^2 N_z)\) for the computation of \(\varphi_k\)-functions. 
% Then for \(N_{RHS}\) right-hand sides, the cost of computing the application
% with the matrix function is \(\mathcal{O}(r2^M N_z N_{RHS})\), and \(r2^M \sim \mbox{CFL}_z\). 
% For a Krylov method, we have \(\mathcal{O}(N_{Krylov} N_z N_{RHS})\).
% Now, it is of course hard to say how many Krylov vectors are necessary, but
% from theory and experience we would also expect \(N_{Krylov} \sim \mbox{CFL}_z\). Then, scaling
% and squaring would have roughly the same complexity, as long as \(N_{RHS} > r2^M \sim N_{Krylov} \sim
% \mbox{CFL}_z\), but it still may have a smaller constant. 
%------after
\sara{In the context of the ETD method \eqref{stage1_pc_tracer}-\eqref{stage2_pc_tracer} we need to apply the same matrix function 
to two different right-hand sides. Moreover, if many tracers have to be computed, the computation of the matrix function has to be 
performed only once in each timestep. The cost of multiplying by the approximation of $\varphi_k(\Delta t J_n^{z,i})$ is 
$\mathcal{O}((1 + 2^M b)^2 N_z)$, and the overall cost for the scaling and squaring method and application to $N_{RHS}$ right-hand side 
is $\mathcal{O}(((1 + 2^M b) + N_{RHS})(1 + 2^M b)N_z)$. For the Krylov method from section \ref{restarted_kry}, we need to recompute the 
Arnoldi like decomposition for every right-hand side, which results in a minimum complexity of $\mathcal{O}(N_{RHS} (1+b) N_{Krylov} N_z)$, 
where $N_{Krylov}$ is the number of matrix-vector products required. Theoretic considerations and practical observations suggest that both 
$N_{Krylov}$ and $(1+2^M b)$ have to be chosen proportional to the vertical CFL number to obtain sufficiently stable and accurate results. 
Thus, for $N_{RHS}$ sufficiently large, the scaling and squaring method is competitive in theory, and yields improved results in practice 
due to the high efficiency of multiplying by the precomputed banded matrix.}

%------before
% The fact that we are dealing with tridiagonal matrices greatly reduces the overall computational cost, and another 
% advantage is that $N_z$ will always be a small number. In \cite{ringler2013multi}, the authors choose $N_z = 40$, and these days 
% ocean models usually consider $N_z=100$. Even if this number will tend to grow in the future, it is fair to assume that its value 
% will not exceed a thousand in the next decade. Thus, the cost of the computation of a single $\varphi_k$-function is pretty 
% inexpensive in our setting.
%------after
\sara{
The fact that we are dealing with tridiagonal matrices greatly reduces the overall computational cost. Moreover, we note that $N_z$ 
is typically of moderate size only. In \cite{ringler2013multi}, the authors choose $N_z = 40$, and 100 layers is a realistic value 
for present day ocean simulations. Even if this number will tend to grow in the future, it is fair to assume that its value will not 
exceed a thousand in the next decade. This also implies that the storage required for a single $\varphi_k$-function approximation is moderate.
}

\section{Numerical Tests} \label{tests}

In this section, numerical tests are presented to investigate the performances of our ETD solver and compare it with other ocean models.  
%Tests showing the advantages of the proposed method for multiple tracers are also presented. 
The ETD time-stepping scheme is implemented using the two approaches for the computation of the $\varphi$-functions described in section 
\ref{ETD}: the Krylov subspace scheme presented in \cite{eiermann2006restarted} and the scaling and squaring algorithm developed for $\varphi_k$ 
with $k \ge 0$. The performances given by the two implementations are investigated numerically in the case of one and multiple tracers. 
Comparisons with other semi-implicit schemes are also presented. 
Finally, \sara{the solver is applied in the context of a simple ocean model and compared to existing models}, 
to make sure that the proposed method is able to reproduce similar 
results under the same physical conditions. To do so, the whole primitive equation system is solved. 
All the tests in this section are 2D, namely one dimension in the horizontal and one in the vertical. 
They have been implemented in the in-house the C++ library FEMuS (\cite{FEMuSlibrary}), and for the Krylov subspace scheme, 
the SLEPc library has been used. 

\begin{remark}For all the tests, up-winding is used to discretize the horizontal and vertical advection. For the performance tests, 
first order up-winding is used for both the vertical and the horizontal, whereas a third-order up-winding scheme for the horizontal 
advection and a first-order scheme for the vertical are employed for the comparisons with other ocean models. 
In section \ref{tracer_eq_section}, the discretization of the tracer equation was presented using a central difference scheme, 
but in general it is always possible to move from a central difference scheme to up-winding, either first order or higher, by 
making an appropriate choice of the diffusion operators. 
Computationally, up-winding was necessary since central differences cause instabilities in advection-diffusion problems (\cite{manzini2008finite}). 
The advection schemes used by other ocean models are discussed in section \ref{primitive_eq_tests}.
\end{remark}

\subsection{Performance} \label{performance}
For the performance tests, the tracer equation is solved, considering $u$, $w$ and $h$ constant in time. 
The domain is a $10$ m $\times 10$ m box discretized with 12 elements in the horizontal and 100 layers in the vertical, hence 
$\Delta x = 0.83$ m and $\Delta z = 0.1$ m.
The velocity field is a circular, \sara{divergence-free field, which is tangential to the boundaries}. It is defined as
$$(u, w) = (-\psi_1(x)\psi_2'(z),\psi_1'(x)\psi_2(z))\;,$$
where
$$\psi_1(x) = 1 - \dfrac{(x - \frac{x_{max}}{2})^4}{(\frac{x_{max}}{2})^4}\;, \quad 
\psi_2(z) = 1 - \dfrac{(z - \frac{z_{min}}{2})^2}{(\frac{-z_{min}}{2})^2}\;,$$ 
with $x_{max} = 10$ and $z_{min} = -10$. 
Figure \ref{velocity} shows that, with this velocity field, the Voronoi cells close to the boundaries experience more vertical transport 
and mixing than those in the center.  
\begin{figure}[!b]
\begin{center}
\includegraphics[scale=0.45]{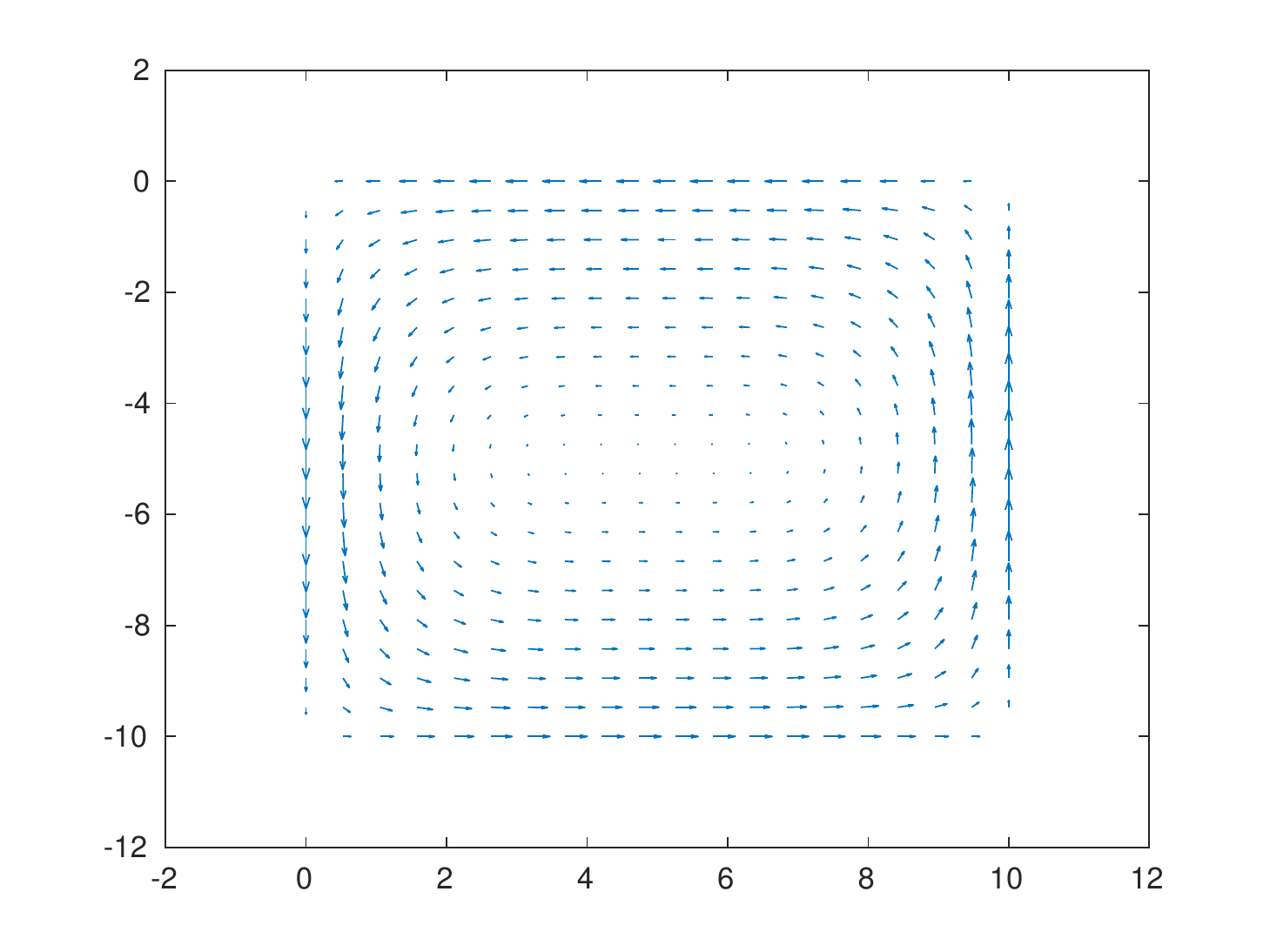}
\caption{Velocity field.} \label{velocity}
\end{center}
\end{figure}
The two CFL numbers, CFL$_x$ and CFL$_z$ are defined as
$$\mbox{CFL}_x = \dfrac{\max u \cdot dt}{\Delta x} \quad \mbox{and} \quad \mbox{CFL}_z = \dfrac{\max w \cdot dt}{\Delta z}\;.$$
With the given velocity field and discretization, the ratio $\dfrac{\mbox{CFL}_z}{\mbox{CFL}_x}$ is equal to 16.6. 
This implies that the transport and mixing in the vertical are significantly bigger than those in the horizontal. 
For the horizontal and vertical diffusion, the values chosen are $\kappa_{x} = 10^{-4}$ and $\kappa_{z} = 2.5 \cdot 10^{-5}$.

%1 tracer
First, let us consider the case of a single tracer, and let us solve the equation with the second stage ETD method 
\eqref{stage1_pc_tracer}-\eqref{stage2_pc_tracer} and 
a semi-implicit method that uses implicit Euler to treat the vertical diffusion and RK4 for the rest of the terms. 
In the following, the latter scheme is denoted as RK4 + implicit Euler.
The initial condition given for the tracer, which we assume to be temperature, $\varTheta(x,z)$, is 
$$\varTheta(x,z)=\begin{cases}
5, & \text{$x<5$ m,} \\
30, & \text{$x\ge 5$ m.}
\end{cases}$$ 
This is the same initial condition given for the temperature in the lock exchange test case analyzed later.  
% Given this initial condition and the above circular velocity field, the temperatures of all vertical layers will even out 
% during the simulation, reaching a steady state. 
Table \ref{speed_up_over_RK} shows the times employed by the two time-stepping schemes to reach the steady state. 
For the ETD method, both the restarted Krylov and the scaling and squaring algorithm have been used for the evaluation of  
the $\varphi_1$ functions. 
When the restarted Krylov algorithm is used, the method is denoted as ETD2 Restarted Krylov, whereas ETD2 Scaling \& Squaring 
indicates the scheme using the scaling and squaring algorithm. 
With ETD2 Restarted Krylov, the vectors $\varphi(\Delta t J_n^{z,i})b_i$ are computed at each stage, without ever 
storing $\varphi_1(\Delta t J_n^{z,i})$. 
\sara{A naive implementation of ETD2 Scaling \& Squaring would consist in evaluating the matrices $\varphi_1(\Delta t J_n^{z,i})$ 
at each stage, but this multiple evaluation is actually unnecessary, since these matrices do not change from one stage to another. 
Therefore, as already mentioned in section \ref{com_com}, in our implementation of ETD2 Scaling \& Squaring the matrices are computed 
only at the first stage, stored, and then re-used at the second stage.}
Taking advantage of the physics of the problem, i.e. larger vertical transport and mixing for the Voronoi cells close to the boundaries,    
20 Krylov vectors have been used for the six external elements and 10 vectors for the central six with ETD2 Restarted Krylov. 
The highest possible dimension for the Krylov subspaces is 100, since the matrices $\Delta t J_n^{z,i}$ are $100 \times 100$.
When the scaling and squaring method is applied, $2^4$ is used as scaling factor for all the matrices $\Delta t J_n^{z,i}$.
As Table \ref{speed_up_over_RK} shows, with the ETD methods larger time steps can be taken than with RK4 + implicit Euler. The values of 
$\Delta t=3$ and $\Delta t=0.25$ were the largest values that could be taken for the two time-stepping schemes without compromising stability. 
The simulations end after 2000 time steps. %, when the steady state has been reached. 
The time employed by the ETD methods is, in all cases, smaller than the time required by RK4 + implicit Euler, because a time step 
12 times larger than that of RK4 + implicit Euler could be used. The computational time is reduced by $5.29$ times using 
ETD2 Restarted Krylov and $5.94$ using ETD2 Scaling \& Squaring.  
%order of accuracy of the time-tepping scheme
Another advantage of such methods over RK4 + implicit Euler is in the order of accuracy of the time-stepping scheme. The two-stage ETD method 
\eqref{stage1_pc_tracer}-\eqref{stage2_pc_tracer} is second order accurate, whereas RK4 + implicit Euler is only first order.

%constant jacobian
The results in Table \ref{speed_up_over_RK} are obtained computing the Jacobian $J_n$ at every time step, since in a realistic 
ocean model simulation, the values of $u$, $w$ and $h$ would change at every time step. 
Consequently, $J_n^z$ and $J_n^x$ would change.
% For this simplified test case, the values of $u$, $w$ and $h$ are constant in time, so $J_n^z$ could be computed once at 
% the first iteration, stored, and re-used in the following iterations. 
Let us recall that $J_n^x$ is actually never built since in \eqref{stage2_pc_tracer}
$R_n$ is obtained for free, and $R_n^{1st\;stage}$ can be constructed by evaluating only the horizontal terms of 
$F(T_n^{1st\;stage})$.
% Taking advantage of a constant $J_n^z$, we would need to compute the matrices $\varphi_1(\Delta t J_n^{z,i})$ 
% only once per simulations and then multiply each of them by a different $b_i$ at every iteration. 
% Following this strategy, the computational time obtained is $20.6170$ s, so an even larger speed-up is reached over RK4+implicit Euler.
% We are not going to take advantage of having a constant in time $J_n^z$ in the following tests, since in realistic 
% ocean modeling simulations this matrix would change at every time step. 
\begin{remark}
If \(w\) does not change too much from one step to the other, we could actually fix $J_n^z$ at some instant of time $t_m$, and 
use $J_m^z$ in the following iterations, \sara{adding the appropriate error terms to the remainder}. 
This choice might still give an advantage to the scaling and squaring method, since we would  
having a constant in time linear operator starting from $t_m$, without compromising accuracy. 
In section \ref{sec:conclusions}, we briefly discuss how this could be done as a future work. 
\end{remark}\label{new_option}

%
% \begin {table}[!t] %[!tpb]
% \setlength\tabcolsep{5.5pt} % default value: 6pt
% \begin{center}
% 	\begin{tabular}{|c|c|c|c|c|} \hline	
% 	\multicolumn{4}{|c|}{1 tracer} \\ \cline {1-4}
% 	\multicolumn{1}{|c|} {scheme} 
% 	& \multicolumn{1}{|c|} {dt} & \multicolumn{1}{|c|} {time steps} & \multicolumn{1}{|c|} {computational time}\\ \hline
% 	     ETD2 Restarted Krylov & 3 & 2000 & 40.2040 \\ \hline %41.2228  
% 	     ETD2 Scaling \& Squaring 1 & 3 & 2000 & 42.9718 \\ \hline   
% 	     ETD2 Scaling \& Squaring 2 & 3 & 2000 & 26.7974 \\ \hline %27.5425, 27.0787 
% 	     RK4 + implicit Euler & 0.5 & 12000 & 105.0620 \\ \hline
% 	\end{tabular}
% \end{center}
% \caption{Computational times considering one tracer. All times are in seconds (s).}
% \label{speed_up_over_RK}
% \end{table}
%
\begin {table}[!t] %[!tpb]
\setlength\tabcolsep{5.5pt} % default value: 6pt
\begin{center}
	\begin{tabular}{|c|c|c|c|c|} \hline	
	\multicolumn{4}{|c|}{1 tracer} \\ \cline {1-4}
	\multicolumn{1}{|c|} {scheme} 
	& \multicolumn{1}{|c|} {dt} & \multicolumn{1}{|c|} {time steps} & \multicolumn{1}{|c|} {computational time}\\ \hline
	     ETD2 Restarted Krylov & 3 & 2000 & 112.7933 \\ \hline %84.6356 con 8 e 4 krylov basis
	     %ETD2 Scaling \& Squaring 1 & 3 & 2000 & 158.8071 \\ \hline   
	     ETD2 Scaling \& Squaring & 3 & 2000 & 100.3743 \\ \hline %495.2636 con implementation con cicli for
	     RK4 + implicit Euler & 0.25 & 24000 & 596.6595 \\ \hline
	\end{tabular}
\end{center}
\caption{Computational times considering one tracer. All times are in seconds (s).}
\label{speed_up_over_RK}
\end{table}
%

%multiple tracers
% \begin {table}[!t] %[!tpb]
% \setlength\tabcolsep{5.5pt} % default value: 6pt
% \begin{center}
% 	\begin{tabular}{|c|c|c|c|c|} \hline	
% 	\multicolumn{4}{|c|}{Computing $\varphi_1$ for one tracer with ETD2 Scaling \& Squaring 2} \\ \cline {1-4}
% 	\multicolumn{1}{|c|} {n. of tracers} 
% 	& \multicolumn{1}{|c|} {dt} & \multicolumn{1}{|c|} {time steps} & \multicolumn{1}{|c|} {computational time}\\ \hline
% 	     1 tracers & 3  & 2000 & 26.7974 \\ \hline
% 	     2 tracers & 3  & 2000 & 31.4944 \\ \hline 
% 	     4 tracers & 3  & 2000 & 37.5350 \\ \hline 
% 	     6 tracers & 3  & 2000 & 44.4740 \\ \hline
% 	     %Scaling jac once phi 1 tracer 2 stages & 3  & 2000 & 11.9483  \\ \hline 
% 	\end{tabular}
% \end{center}
% \caption{Computational times (in seconds, s) for two, three and four tracers, using the method ETD2 Scaling \& Squaring 2 
% where the $\varphi_1$ functions are evaluated only for one tracer.}
% \label{multiple_tracers}
% \end{table}
%
\sara{
\begin {table}[!t] %[!tpb]
\setlength\tabcolsep{5.5pt} % default value: 6pt
\begin{center}
	\begin{tabular}{|c|c|c|c|c|} \hline	
	\multicolumn{5}{|c|}{Multiple tracers} \\ \cline {1-5}
	\multicolumn{1}{|c|} {} & \multicolumn{2}{|c|} {ETD2 Scaling \& Squaring} & \multicolumn{2}{|c|} {ETD2 Restarted Krylov} \\ \cline {1-5}
	\multicolumn{1}{|c|} {n. of tracers} & \multicolumn{1}{|c|} {time} & \multicolumn{1}{|c|} {time for each tracer} 
	& \multicolumn{1}{|c|} {time} & \multicolumn{1}{|c|} {time for each tracer} \\ \hline
	     1 tracers & 100.37 & 100.37 & 112.79 & 112.79 \\ \hline
	     2 tracers & 109.10 & 54.55 & 202.40 & 101.2 \\ \hline %107.8241
	     4 tracers & 133.58 & 33.40 & 383.26 & 95.82 \\ \hline 
	     6 tracers & 153.32 & 25.55 & 562.67 & 93.78 \\ \hline
	     %Scaling jac once phi 1 tracer 2 stages & 3  & 2000 & 11.9483  \\ \hline 
	\end{tabular}
\end{center}
\caption{Computational times (in seconds, s) for one, two, four and six tracers, using ETD2 Scaling \& Squaring 
where the $\varphi_1$ functions are evaluated only for one tracer and ETD2 Restarted Krylov. 
The time step used is $\Delta t =3$ and the total number of time steps is 2000.}
\label{multiple_tracers}
\end{table}
}

Now, let us consider the case of multiple tracers. Having multiple tracers implies that multiple equations of the form \eqref{tracer_hor_discr}
need to be solved. The same test case as for the single tracer simulations is used. For simplicity, the same initial condition 
%as in the lock exchange test case 
is given but with different numerical values for each tracer, so that the steady state changes for 
each tracer. In the upcoming tests, the time-stepping scheme RK4 + implicit Euler is not considered. 
Having multiple tracers may seem a straightforward task to deal with, but in ocean modeling the amount of tracers can be quite large, 
ranging from 1 to 70. The computational time employed to solve these equations can then drag down the run time of the whole simulation. 
For this reason, it is not ideal to just naively solve multiple tracer equations. 
When dealing with multiple equations of the form \eqref{tracer_hor_discr}, one can take advantage of the fact that 
the Jacobian $J_n$ is the same for all of them, no matter how many they are. Therefore, the matrices $J_n^{z,i}$ and 
$\varphi_1(\Delta t J_n^{z,i})$ can be computed only for one tracer at every time step, and then used to solve all the tracer equations. 
What changes for each equation is just the right hand side $b_i$ that multiples $\varphi_1(\Delta t J_n^{z,i})$ for a given $i$. 
Combining this strategy with ETD2 Scaling \& Squaring, we simply compute the matrices $\varphi_1(\Delta t J_n^{z,i})$ at the 
first stage for only one tracer. 
Table \ref{multiple_tracers} shows the computational times obtained with this approach in the case of 1, 2, 4 and 6 tracers 
\sara{and compares it with the performances obtained with ETD2 Restarted Krylov}. 
From the table, we see that an important advantage of \sara{computing the $\varphi_1$-functions only for one tracer} 
lies in the increased time saving when more tracers are added. 
\sara{The numbers reported in column three scale rapidly, meaning that, every time that a tracer is added, just a small amount of cost 
is dedicated for the solution of the new equation. 
%The numbers reported \sara{computing the $\varphi_1$-functions only for one tracer} are a few seconds apart from each other, meaning that 
The majority of the time is devoted to the computation of the $\varphi_1$ functions, whereas the evaluation of the products 
$\varphi_1(\Delta t J_n^{z,i}) b_i$ is cheap. For example, the computational time moving from one to six tracers increases only by 
a factor of 0.5, hence an even bigger advantage is expected when 40 or 50 tracers are present.} 
Using ETD2 Restarted Krylov, the matrices $\varphi_1(\Delta t J_n^{z,i})$ are re-computed 
for every tracer and this causes the times to roughly double when the number of tracers is doubled. 
% Specifically, with ETD2 Restarted Krylov, the times obtained are 112.7933 s, 202.3996 s, 383.2641 s, 562.6715 s for 1, 2, 4 and 6 tracers, respectively whereas with ETD2 Scaling \& Squaring 1 they are 158.8071 s, 297.2278 s, 580.3653 s, 853.6001 s for 1, 2 , 4 and 6 tracers, respectively.
\sara{Thus, every time a tracer is added to the system, the computational cost significantly increases.} 
The speed-up over the Restarted Krylov implementation is almost 50\% with 2 tracers, and it keeps increasing up to 72.75\% for 6 tracers. 
Therefore, a significant amount of time is saved by taking advantage of having the same linear part for all tracers.  

\begin{remark}For this test, we consciously considered a small number of elements in the horizontal because one of the biggest 
advantage of the ETD method \eqref{stage1_pc_tracer}-\eqref{stage2_pc_tracer} is to be heavily parallel, and so each 
process will have only a few numbers of Voronoi cells to deal with. 
Thus, using a fine grid in the horizontal is not relevant for this performance comparison.\end{remark}

\subsection{Comparison with other ocean models} \label{primitive_eq_tests}
We now compare the proposed ETD method to other ocean models under identical conditions (whenever the data is available). 
Two benchmark tests from \cite{ilicak2012spurious} are performed, addressing the solution of the primitive equations. 
These tests show, for different initial conditions, the temperature distribution at a given instant of time. 
Since hydrostaticity is assumed in this work, the primitive equations are described by the incompressible Boussinesq 
equations in hydrostatic balance. For the tests, the tracer equation is coupled to the dynamics.  
%The coupling is one way, meaning that, at each time step, the solution of the dynamics equations is given as input to the tracer equation. 
A second-order ETD solver is employed for the dynamics system, specifically Exponential Rosenbrock Euler. 
The tests presented are two-dimensional in $(x,z)$, namely one dimension in the horizontal and one dimension in the vertical, and 
a linear equation of state is used. The equation of state has the form
$$\rho = \rho_{ref} - \alpha(\varTheta - \varTheta_{ref})\;,$$
where $\rho_{ref} = 1000$ kg m$^3$, $\alpha = 0.2$ kg m$^3$ C$^{-1}$ and $\varTheta_{ref} = 5^{\circ}$ C, 
so that density depends only on temperature.
For both tests, our solutions are compared with those obtained by MPAS-Ocean, and for the second test, 
a comparison is made also with MITgcm and MOM. 
The advection schemes used by these three codes and our ETD scheme are different. 
The tracer advection scheme we use is a third-order upwinding scheme for the horizontal advection and a first-order 
upwinding scheme for the vertical, whereas MPAS-Ocean computes high- and low-order estimate of the tracer flux 
which are then blended using the flux-corrected transport scheme of Zalesak~\cite{zalesak1979fully}. 
%uses monotonic flux-corrected transport third-order reconstruction of the tracer values at the cell edges 
%\cite{skamarock2011conservative, petersen2015evaluation}. 
MITgcm uses a 7th-order monotonicity preserving advection scheme~\cite{marshall1997finite}, wheres MOM employs a third-order accurate
scheme based on a multi-dimensionsal piecewise parabolic method~\cite{griffies2009elements}. 
We remark that another different between the four solvers relies in the treatment of the vertical advection, which is explicit 
for the three ocean models and implicit in our method. 
The results shown in this section are obtained using the restarted Krylov subspace method for computing the $\varphi_1$ functions. 
Solutions obtained with the scaling and squaring method are nearly identical.
The comparisons made are only qualitative, to show that our tracer solver is able to reproduce results comparable to those 
of other models. 
%Quantitative results about the performances of the proposed ETD scheme for the tracer equation are presented in section \ref{performance}.  
  
\subsubsection{Lock Exchange Test Case}
The lock exchange test case may be thought of as two basins of water with different temperatures that start interacting at time zero.
The domain is a $64,000 \times 20$ rectangle, where $64,000$ is the $x$ size and $20$ is the $z$ size. All dimensions are in meters (m). 
The cell sizes are $\Delta x = 500$ and $\Delta z = 1$, i.e.\ 128 elements are considered
in the horizontal, while twenty layers are considered in the vertical. 
%, each with an initial height of 1 m.  
The initial condition for temperature is 
$$\varTheta(x,z)=\begin{cases}
5, & \text{$x<32,000$ m,} \\
30, & \text{$x\ge 32,000$ m,}
\end{cases}$$
so, warm water flows over the cold water from right to left, and viceversa once the domain boundaries are touched. 
The initial condition for velocity is $u=0$ in every layer.
The values for the horizontal and vertical viscosity are $100$ m$^2$ s$^{-1}$ and $0.0001$ m$^2$ s$^{-1}$, respectively, 
while all tracer diffusions are turned off. 
The simulation stops at 17 h, and the same $dt$ used by MPAS-Ocean in \cite{petersen2015evaluation} is adopted,
i.e. $dt=60$ s. 
Figure~\ref{lock} shows the temperature distribution at 17 h obtained with our ETD solver and with MPAS-Ocean.
\begin{figure}
\begin{center}
(a) \includegraphics[height=3.0cm,width=6.4cm]{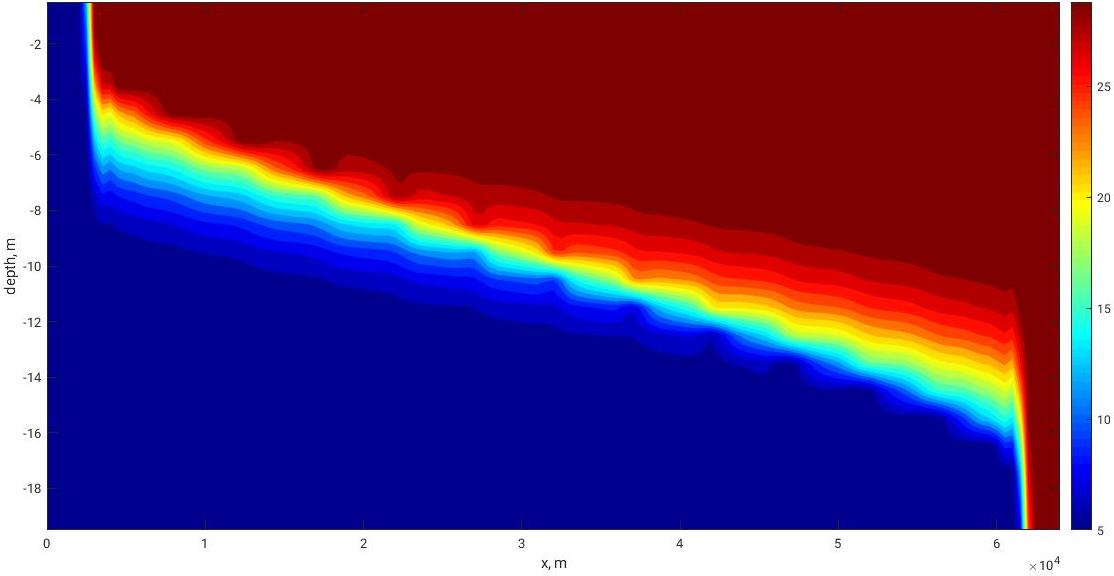} \\
(b) \includegraphics[scale=0.45]{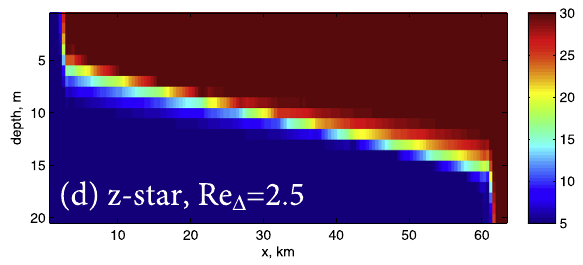}
\caption{Temperature distribution for the lock exchange test case with $\nu_h=100$: (a) proposed ETD solver, 
(b) MPAS-Ocean (\cite{petersen2015evaluation}).} \label{lock}
\end{center}
\end{figure}
The simulation performed by MPAS-Ocean uses z-star vertical coordinates, while we use z-level. This difference does not compromises the 
comparison between the two distributions, since, as reported in \cite{petersen2015evaluation}, results for z-level and z-star vertical coordinate settings in MPAS-Ocean are nearly identical. 
As Figure \ref{lock} shows, the two distributions are comparable.  
With z-type coordinates, the intermediate layers are expected to have temperatures in between $5^{\circ}$ C and $30^{\circ}$ C, and this 
behavior is visible in both Figure \ref{lock} a) and Figure \ref{lock} b). The right front location at 17 h is very similar: with MPAS-Ocean 
the front is at 62 km, while with our ETD solver is at 62.4 km. The location obtained with the proposed ETD solver coincides with the 
theoretical prediction for this test based on the speed of a gravity current in a rectangular channel \cite{benjamin1968gravity}. 
A difference between the two temperature distributions is in the amplitude of the mixing. 
The interface between the density layers is sharper in Figure \ref{lock} b), and this is probably due to the different advection scheme 
used by MPAS-Ocean and to the exponential treatment of the vertical advection in our ETD solver. 

\subsubsection{Internal Waves Test Case}
\begin{figure}[!t]
\begin{center}
(a) \includegraphics[scale=0.3]{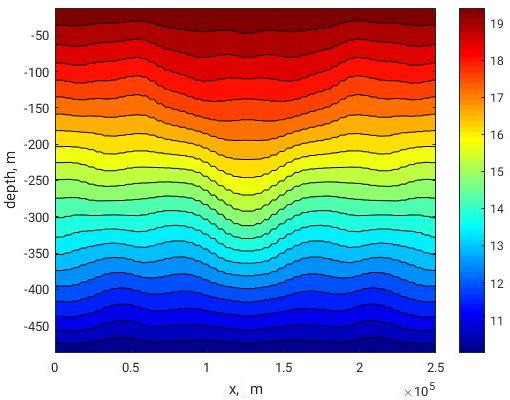}
(b) \includegraphics[height=4.2cm,width=5.0cm]{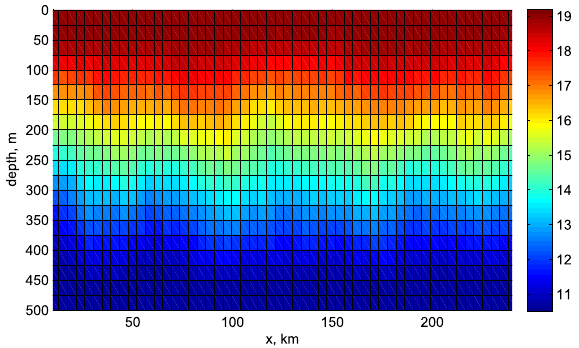}\\
(c) \includegraphics[scale=0.53]{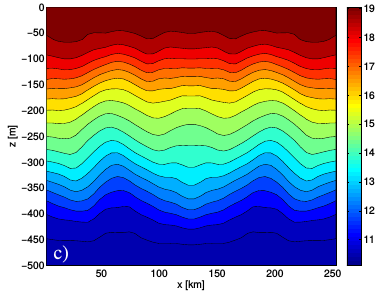}
(d) \includegraphics[scale=0.53]{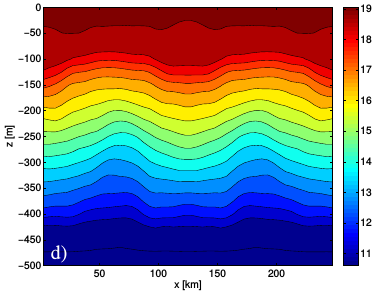}
\caption{Temperature distribution for the internal waves test case: (a) our ETD solver, (b) MPAS-Ocean (\cite{petersen2015evaluation}),
(c) MITgcm (\cite{ilicak2012spurious}), (d) MOM (\cite{ilicak2012spurious}).} \label{internal}
\end{center}
\end{figure} 
Internal waves are waves that oscillate within the interior of the ocean, rather than on its surface. 
They generate when the interface between layers of different water densities is disturbed. 
This test was chosen for further validation since linear internal waves tend to produce vertical mixing
in ocean models, especially when z-level and z-star coordinates are employed \cite{gouillon2010internal}. 
The domain is a $250,000 \times 500$ rectangle, where the dimensions are given in meters. 
The cell sizes are $\Delta x = 5000$ and $\Delta z = 25$, i.e. 50 elements are considered in the horizontal, while 
twenty layers are considered in the vertical. 
%, each with an initial height of 25 m.  
The initial temperature distribution is $\varTheta_0(z) + \varTheta'(x,z)$, with
\begin{align}\varTheta_0(z) & = \varTheta_{bot}+(\varTheta_{top}-\varTheta_{bot})\dfrac{z_{bot}-z}{z_{bot}}\,,\;\;\mbox{and} \\
\varTheta'(x,z) & = -A \cos \Big(\frac{\pi}{2L}(x-x_0)\Big) \sin\Big(\pi\frac{z + 0.5 \Delta z}{z_{bot} + 0.5 \Delta z}\Big)\,,\end{align}
where $\varTheta_{bot} = 10.1^{\circ}$ C, $\varTheta_{top} = 20.1^{\circ}$ C, $z_{bot} = -487.5$ m,
$L = 50$ km, $x_0 = 125$ km, $x_0 - L < x < x_0 + L$, $\Delta z = 25$ m, and $A = 2^{\circ}$ C. 
This means that we initially have a small temperature perturbation in each layer that induces wave propagation out from the center. 
This behavior is similar to that found in realistic global simulations. 
The initial condition for velocity is $u=0$ in every layer.
The values for the horizontal and vertical viscosity are $0.01$ m$^2$ s$^{-1}$ and $0.0001$ m$^2$ s$^{-1}$, respectively, 
while all tracer diffusions are, again, turned off. 
Unlike the previous test, this simulation may proceed indefinitely, but we choose to stop it at 200 days. 
The time step used is $dt=300$ s, which, again, is the same adopted by MPAS-Ocean.
Figure \ref{internal} shows the temperature distribution at 200 days obtained with our ETD solver and three other models: 
MPAS-Ocean, MITgcm and MOM. 
In general, the four temperature distributions are comparable.
The mixing is visible in all models, despite is different among the four solutions.  
Again, the four codes present four different advection schemes for the tracer equation, and this could explain the discrepancies in the 
solutions.  

\section{Concluding Remarks}
So far, in the analysis and in all tests performed, we assumed a zero forcing term for the tracer equations. 
Now, the more general case of a non-zero forcing term is considered, and we show that the analysis made above is still valid. 

Forcing terms introduce the contributions given by external factors like penetrative solar radiation and surface boundary conditions  \cite{madec2015nemo}. By treating these terms explicitly, i.e. excluding them from the matrix exponential, we can still follow the 
operator splitting procedure shown in section \ref{jac_splitting}. 
Let us assume that the forcing term $\mathcal{F^T}$ in \eqref{tracer} is non-zero. 
Let us define the Jacobian $J$ as the Jacobian of the associate equation with a zero forcing term. 
In this way, the matrices $J_n^z$ and $J_n^x$ are the same as for the zero forcing term case and the reminder is now 
given by $\mathcal{F^T}$ plus $J_n^x T$. 
Hence, no matter the nature of $\mathcal{F^T}$ and no matter its form, we can still apply the method 
\eqref{stage1_pc_tracer}-\eqref{stage2_pc_tracer} with the same $J_n^z$ matrix as for the zero forcing term case. 
In this way, when multiple tracers are present, they will all have the same linear part, and so the matrices $\varphi_1(J_n^{z,i})$ 
just need to be computed for one tracer and re-used for the others, as we did in the numerical tests. 
The possibility of re-using the matrices $\varphi_1(J_n^{z,i})$ even in the more general case of a non-zero forcing term  
is a great feature of the method that gives a saving in computational time that increases with the number of tracers.
What changes in the scheme \eqref{stage1_pc_tracer}-\eqref{stage2_pc_tracer} is the difference $R_n^{1st\;stage} - R_n$ 
that appears in the second stage, since in both reminders there is now the contribution of $\mathcal{F^T}$.

A possible issue that may occur with the introduction of forcing terms relies in the associated time-scales.  
By adding a forcing term, new physical or chemical processes are taken into account, in particular biochemical reaction terms 
may be included. If the time-scales associated with such processes are comparable with those associate with the 
vertical transport and mixing, then these terms must be included in the matrix exponential to correctly account for them. 
This scenario may occur for some tracers, but for many passive tracers the time-scales associated with their dissolution in the ocean 
are relatively long \cite{siberlin2011oceanic}. The dissolution of gases, for example, could take a long time to reach equilibrium, 
from decades to centuries. 

\section{Conclusions}
\label{sec:conclusions}

In this work, we developed an ETD solver for the tracer equations appearing in ocean modeling. The linear operator has been split 
in a vertical and horizontal part, the former was treated with a matrix exponential, whereas the latter was handled explicitly.
The need to treat the vertical terms exponentially is due to the fast time scales that govern these terms for instance in the case of eddies 
or when two bodies of water with different temperatures meet.
The ETD scheme was implemented using two methods to compute the $\varphi_k$-functions, i.e. the Krylov subspace method presented in
\cite{eiermann2006restarted}, and a scaling and squaring method that we developed to evaluate $\varphi_k$-functions with $k \ge 0$.
The proposed time-stepping scheme has been compared with the semi-implicit scheme RK4 + implicit Euler and a significant 
speed-up were observed for both implementations, due to much bigger time-step sizes that could be taken with the ETD method.
The case of multiple tracer equations was also addressed. Exploiting the fact that all these equations have the same linear part, the matrices $\varphi_1(J_n^{z,i})$ could be computed for one tracer and then re-used for the others.
This approach resulted in a significant advantage in terms of computational time, even up to a $82.04$\% gain over an 
implementation where the matrices were evaluated for every tracer.
Finally, we coupled the tracer equations with the dynamics system to make comparisons with other ocean models.
Two benchmark tests were performed and both showed that the results obtained with the proposed ETD scheme were comparable with the ones 
obtained with existing ocean models.

Future work will be on developing a local ETD time-stepping scheme, where different time-steps are used on different sub-domains, 
depending on their associated time-scales. As discussed in remark \ref{new_option}, another interesting \sara{extension} would be investigating 
another choice of the linear operator used in \eqref{stage1_pc_tracer}-\eqref{stage2_pc_tracer}, i.e. instead of having a time dependent \(J_n^z\), 
a fix \(J^z_{m}\) may be used starting at some instant of time $t_m$. 
A scaling and squaring method would particularly benefit from this choice, since in this context the assembly cost of matrices is not an issue 
and we can directly reuse the old propagator \(\varphi_1(\Delta{t} J^z_{m}\)).  
Then, the term \(J_n^z - J^z_{m}\) is part of the remainder, which would be not stiff as long as \(\Delta{t} < \abs{w(t_m) - w(t_n)} / \Delta{z}\). 

\section*{Acknowledgments} 
This work was supported by the US Department of Energy Office of Science under grants DE-SC0016591 and DE-SC020418, 
by the Fog Research Institute under contract no.~FRI-454, and in part, by UT-Battelle, LLC, 
under Contract No. DE-AC05-00OR22725 with the U.S. Department of Energy. Accordingly, the U.S. Government retains 
a non-exclusive, royalty-free license to publish or reproduce the published form of this contribution, or allow 
others to do so, for U.S. Government purposes.

\bibliography{references}

\end{document}